\def\d{\mathrm{d}}
\def\lawcn{\buildrel d \over \rightarrow}
\newcommand{\var}{\mathrm{Var}}
\newcommand{\VaR}{\mathrm{VaR}}
\newcommand{\TVaR}{\mathrm{ES}}
\newcommand{\E}{\mathbb{E}}
\newcommand{\R}{\mathbb{R}}
\newcommand{\N}{\mathbb{N}}
\newcommand{\p}{\mathbb{P}}
\newcommand{\id}{\mathrm{I}}
\newcommand{\lcx}{\prec_{\mathrm{cx}}}
\renewcommand{\ge}{\geqslant}
\renewcommand{\le}{\leqslant}
\renewcommand{\epsilon}{\varepsilon}
\theoremstyle{plain}
\newtheorem{theorem}{Theorem}
\newtheorem{corollary}[theorem]{Corollary}
\newtheorem{lemma}[theorem]{Lemma}
\newtheorem{proposition}[theorem]{Proposition}
\theoremstyle{definition}
\newtheorem{definition}{Definition}[section]
\newtheorem{example}{Example}[section]
\theoremstyle{remark}
\newtheorem{remark}{Remark}[section]
\numberwithin{equation}{section} \numberwithin{theorem}{section}
\begin{document}

\title{Extreme Negative Dependence and Risk Aggregation}

\author{Bin Wang\thanks{Department of Mathematics, Beijing Technology and Business University, Beijing 100048, China.}~\  and Ruodu Wang\thanks{Corresponding author.
Department of Statistics and Actuarial Science, University of Waterloo.
(email: \texttt{wang@uwaterloo.ca}).}
}
\date{\today}
\maketitle

\begin{abstract}  We introduce the concept of an extremely negatively dependent (END) sequence of random variables with a given common marginal distribution. The END structure, as a new benchmark for negative dependence, is comparable to comonotonicity and independence. We show that an END sequence always exists for any given marginal distributions with a finite mean and we provide a probabilistic construction. Through such a construction, the partial sum of identically distributed but dependent random variables is controlled by a random variable that  depends only on the marginal distribution of the sequence. The new concept and derived results are used to obtain asymptotic bounds for risk aggregation with dependence uncertainty.

\begin{bfseries}Key-words\end{bfseries}: central limit theorem; variance reduction; sums of random variables; dependence uncertainty; risk aggregation.

\textbf{Mathematics Subject Classification (2010)}: 60F05, 60E15
%\textbf{JEL Classifications} C10

\end{abstract}

\section{Introduction\label{intro}}

For a given univariate distribution (function) $F$ with finite mean $\mu$, let $X_1,X_2,\ldots$ be any sequence of random variables from the distribution $F$ and denote the partial sum $S_n=X_1+\cdots+X_n$ for $n\in \N$. The distribution of $S_n$ varies under different assumptions of dependency (joint distribution) among the sequence $(X_i, i\in \N)$. For example, if we assume that the variance of $F$ is finite, then it is well-known that
\begin{itemize}
\item[\textbf{(a)}] if $X_1,X_2,\ldots$ are independent, $S_n$ has a variance of order $n$, and $(S_n-n\mu)/\sqrt n$ converges weakly to a normal distribution  (Central Limit Theorem);
\item[\textbf{(b)}] if $X_1,X_2,\ldots$ are \emph{comonotonic} (when $X_1,X_2,\ldots$ are identically distributed, this means $X_1=\cdots=X_n$ a.s.), $S_n$ has a variance of order $n^2$ and $S_n/n$ is always distributed as $F$.
\end{itemize}
However,  the following question remains: among all possible dependencies, is there one dependency which gives the following \textbf{(c1)} or \textbf{(c2)}?
\begin{itemize}
\item[\textbf{(c1)}] $S_n, n\in \N$ have variance bounded by a constant. Equivalently, $S_n$ has a variance of order $O(1)$ as $n\rightarrow \infty$;
\item[\textbf{(c2)}] $(S_n-n\mu)/k_n$ converges a.s. for any $k_n\rightarrow \infty$ as $n\rightarrow \infty$. It is easy to see that this limit has to be zero.
\end{itemize}

  In this paper, we answer questions \textbf{(c1)}-\textbf{(c2)}.  In contrary to the positive dependence in \textbf{(b)}, we use the term \emph{extremely negative dependence} (END) for the dependence scenario which gives  \textbf{(c2)}.  We show that there is always an END that yields \textbf{(c2)}, and the same dependency also gives \textbf{(c1)} if we further assume the third moment of $F$ is finite. Within our framework \textbf{(c1)} is stronger since it at least requires a finite variance and \textbf{(c2)} always has a positive answer, although \textbf{(c1)} and \textbf{(c2)} are not comparable for a general sequence.
 Moreover, we show that there exists a dependency among random variables $X_1,X_2,\ldots$ such that $|S_n-n\mu|$ is controlled by a single random variable $Z$, the distribution of which  is in terms of $F$ and does not depend on $n$.

The research on questions of the above type is very much related to the following fundamental question:
\begin{itemize}
\item[\textbf{(A)}] what are the possible distributions of the random variable $S_n$ without knowing the dependence structure of $(X_1,\ldots,X_n)$?
\end{itemize}
Here, theoretically, $S_n$ can be replaced by any functional of $(X_1,\ldots,X_n)$. In this paper we focus on $S_n$ for it is the most typical functional studied in the literature, and it has self-evident interpretations in applied fields.
Question \textbf{(A)} is a typical question concerning uncertain dependence structures of random vectors. It involves optimization over functional spaces with non-linear constraints, and
 is closely related to research on copula theory, mass transportation theory, Monte-Carlo (MC) and Quasi-MC (QMC) simulation, and quantitative risk management.  The interested reader is referred to \cite{N06} (for copula theory), \cite{RR98} (for mass transportation),  \cite{G06} (for (Q)MC simulation) and \cite{MFE05} (for quantitative risk management). Moreover, in \cite{R13} (Parts I and II), these links as well as recent research on them are extensively discussed  with a perspective of financial risk analysis.

Question \textbf{(A)}  turns out to be highly non-trivial. As far as we know, even for the case $n=2$, \textbf{(A)} is still open.  In the literature, a weaker version of \textbf{(A)} is studied more often:
\begin{itemize}
\item[\textbf{(B)}] what are the \emph{extremal} (in some sense) distributions of $S_n$ without knowing the dependence structure of $(X_1,\ldots,X_n)$?
\end{itemize}
Of course,  here we need to define the term \emph{extremal} mathematically. Different definitions lead to different solutions and different approaches.
The first answer to a question of type \textbf{(B)} was given by \cite{M81},
who, in response to a question earlier raised by
A.N. Kolmogorov, gave the maximal and minimal values of the distribution function of $S_2=X_1+X_2$ for given marginal distributions of $X_1$ and $X_2$. \cite{R82} independently gave the answer to the same question based on a different approach originating from mass transportation theory.

Finding the point-wise minimal and maximal values of the distribution $F_S$ of $S_2$  does not directly imply all possible distributions in \textbf{(A)}; a global characterization is unavailable.  Hence, the question \textbf{(A)} is only partially answered. Unfortunately, even if we limit the discussion to minimum and maximum of the distribution function, Marakov's and R\"uschendorf's methods cannot be extended into $n\ge 3$ cases without assuming specific forms of $F$. More recently, a series of papers \cite{EP06}, \cite{EPR13} and \cite{WPY13} discussed the minimal and maximal questions under different assumptions on $F$.

Another direction of research related to \textbf{(B)} considers the worst-case variance or expected convex functions of $S_n$. In that case, the criterion for the extremal distribution can for instance be chosen as
\begin{equation}\label{eq1s1}\max\{\var(S_n):X_i\sim F, ~i=1,\ldots,n\},\end{equation}
or
\begin{equation}\label{eq2s1}\max\{\E[(S_n-x)_+]:X_i\sim F, ~i=1,\ldots,n\}, ~~~x \in \R,\end{equation}
where  $(\cdot)_+=\max\{\cdot,0\}.$
These criteria lead to the well-known {comonotonic scenario} \textbf{(b)}, where $X_1=\cdots=X_n$ is the solution to the above optimization problems. The comonotonic scenario conveniently gives the maximum convex ordering element among all possible dependence structures (for definition and properties of the convex order, see \cite{SS07}, Chapter 1). For comonotonicity and its applications, the reader is referred to \cite{DDV11}.

On the other hand, answering the minimal questions seems to be more challenging. For example, the analytical solutions to the optimization problems \begin{equation}\label{eq3s1}\min\{\var(S_n):X_i\sim F, ~i=1,\ldots,n\},\end{equation}
and
\begin{equation}\label{eq4s1}\min\{\E[(S_n-x)_+]:X_i\sim F, ~i=1,\ldots,n\},  ~~~x \in \R,\end{equation}
are unknown for general marginal distributions when $n\ge3$; \eqref{eq3s1}-\eqref{eq4s1} may indeed lead to different optimal dependence structures. For a disucssion on the convex ordering minimal elements with given marginals, we refer to \cite{BJW13} and the references therein. Questions \eqref{eq3s1}-\eqref{eq4s1} are typical variance reduction problems, and hence they naturally apply to the sample generating procedure in MC simulations. See  \cite{RU02} and \cite{WW11} for recent developments on the explicit solutions to   \eqref{eq3s1}-\eqref{eq4s1}.  It is obvious that the questions  \textbf{(c1)}-\textbf{(c2)},  in an asymptotic manner, are directly linked to the optimization problems \eqref{eq3s1}-\eqref{eq4s1}.

Questions  \textbf{(c1)}-\textbf{(c2)} are also relevant to the study of \emph{risk aggregation with dependence uncertainty} (see for example, \cite{BJW13}) in quantitative risk management. The aggregate position $S_n$ represents the total risk or loss random variable in a given period, where $X_1,\ldots,X_n$ are individual risk random variables. Assume we know the marginal distributions of $X_1,\ldots,X_n$ but the joint distribution of $(X_1,\ldots,X_n)$ is unknown.  This assumption is not uncommon in risk management where interdependency modeling  relies very heavily on data and computational resources. A risk regulator or manager may for instance be interested in a particular risk measure $\rho$ of $S_n$. However, without information on  the dependence structure,   $\rho(S_n)$ cannot be calculated. It is then important to identify the extreme cases: the largest and smallest possible values of $\rho(S_n)$, and this relates to question \textbf{(B)} and in particular,  to \textbf{(c1)}-\textbf{(c2)}. To obtain the extreme values of $\rho(S_n)$ for finite $n$, a strong condition of  \emph{complete mixability}is usually imposed in the literature, and explicit values are only available for some specific choices of marginal distributions; see for example \cite{WW11}, \cite{WPY13}, \cite{EPR13} and \cite{BJW13}. On the other hand, there is limited research on the asymptotic behavior of $\rho(S_n)$ as $n\rightarrow \infty$. In this paper, we use the concept of END to derive asymptotic estimates for the popular risk measures $\VaR$ and $\TVaR$ of $S_n$ as $n\rightarrow \infty$ for any marginal distribution $F$. As a consequence, our results based on END lead to the asymptotic equivalence between worst-case VaR and ES, shown recently by \cite{PR13} and \cite{PWW13} under different assumptions on $F$. As an improvement, our result does not require any non-trivial conditions on $F$, and gives the convergence rate of this asympotic equivalence.

The rest of the paper is organized as follows.
In Section 2, we study the sum of END random variables, and show that   the sum is controlled by a random variable with distribution derived from $F$. Some examples are given and a link between complete mixability and END is provided. In Section 3, we provide asymptotic bounds for  expected convex functions and risk measures of the aggregate risk with dependence uncertainty, and we further establish an asymptotic equivalence between the worst-case VaR and worst-case ES. Section 4 draws some conclusions.
In this paper, we assume that all random variables that we discuss in this paper are defined on a common general atomless probability space $(\Omega, \mathcal A, \p).$  In such a probability space, we can generate independent random vectors with any distribution.

\section{Extremely Negatively Dependent Sequence}

Throughout the paper, we denote $S_n=X_1+\cdots+X_n$ where $X_1,\ldots,X_n$ are random variables with distribution $F$, if not specified otherwise, and we assume that the mean $\mu$ of $F$ is finite.
We also define the generalized inverse function of any distribution function $F$ by $F^{-1}(t)=\inf\{x:F(x)\ge t\}$ for $t\in (0,1]$ and its left endpoint $F^{-1}(0)=\inf\{x:F(x)> 0\}$.

\subsection{Main results}

In this section we will show that there exists a sequence of random variables $X_1,X_2,\ldots$ with common distribution $F$, such that   $|S_n-n\mu|$ is controlled by a random variable $Z$ that does not depend on $n.$ It turns out that such a random variable $Z$ has a distribution  derived directly from $F$. We call it the \emph{residual distribution} of $F$, as defined below.

 The idea behind is that we try to construct a sequence of random variables $X_1,X_2,\ldots$  such that each of the member compensates the sum $S_n$. For each random variable $X_i$, we consider two possibilities: $X_i$ is ``large" and $X_i$ is ``small". We design a dependence  such that the number of ``large" $X_i$'s, $i=1,\cdots,n$ and the number of ``small" $X_i$'s $i=1,\cdots,n$ are balanced in a specific way. Moreover, the ``large" part and the 	``small" part are counter-monotonic so that they compensate each other. We first introduce some notation.

  Let $$H(s)=\int_0^s (F^{-1}(t)-\mu) \d t, ~~ s\in[0,1],$$
and  denote $\nu=F(\mu-)$ and $\nu^+=F(\mu)$ (when $F$ does not have a probability mass at $\mu$, $\nu=\nu^+$).
It is easy to see that the  function $H$ is bounded, strictly decreasing on $[0,\nu]$, strictly increasing on $[\nu^+,1]$, $H(0)=H(1)=0$, and the minimum value of $H(s)$ is attained at $c:=H(\nu)=H(\nu^+)<0$. Moreover, $H$ is a convex function and hence is almost everywhere (a.e.) differentiable on $[0,1]$. For each $s\in (c,0]$, denote by $A(s)\in [0,\nu)$ and $B(s)\in (\nu^+,1]$ such that $H(A(s))=s$ and $H(B(s))=s$, i.e. $A$ and $B$ are the inverse functions of $H$ on the two intervals $[0,\nu)$ and $(\nu^+,1]$, respectively. Moreover, let $A(c)=B(c)=\nu^+$. Note that since $H$ has an a.e. non-zero derivative, $A$ and $B$ are a.e. differentiable on $[c,0]$.
Let $$K(s)=\left\{\begin{array}{cc} 1& s>0,\\B(s)-A(s) &c<s\le 0,\\\nu^+-\nu&s=c ,\\0 &s< c.\end{array}\right.$$ $K(s)$ is right-continuous, increasing, $K(c-)=0$, and  $K(0)=1$, hence it is a distribution function on $[c,0]$ with probability mass $\nu^+-\nu$ at $c$ and $K$ is continuous on $(c,0]$.
Note that $H$, $A$, $B$, and $K$ all depend on $F$. Later, we will see that $B$ leads to the ``large" values of $X_i$ and  $A$ leads to the ``small" values of $X_i$.

\begin{definition}\label{def1}
The \emph{residual distribution} of  a distribution $F$ is the distribution of the random variable $F^{-1}(B(Y))-F^{-1}(A(Y))$, where $Y\sim K$. The residual distribution of a distribution $F$ is denoted by $\tilde F$.
\end{definition}

Using this definition, we are able to present our first result.

\begin{theorem}\label{main1a}
Suppose $F$ is a distribution with mean $\mu$,  then there exist $X_i\sim F$, $i\in \N$ and $Z\sim \tilde F$, such that for each $n\in \N$,
\begin{equation}|S_n-n\mu|\le Z.\label{main1eq1a}\end{equation}
\end{theorem}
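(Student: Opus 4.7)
The plan is to construct $(X_n)$ from a single parametrizing pair $(Y,U)$ of independent random variables, $Y\sim K$ and $U\sim U[0,1]$, whose joint existence is guaranteed by the atomlessness of $(\Omega,\mathcal A,\p)$. On the event $\{Y>c\}$ I attach two candidate values
\[
\ell(Y)=F^{-1}(A(Y)),\qquad r(Y)=F^{-1}(B(Y)),
\]
so that $\ell(Y)<\mu<r(Y)$, together with the mixing weight $p(Y)=(\mu-\ell(Y))/(r(Y)-\ell(Y))\in(0,1)$, characterised uniquely by $p(Y)r(Y)+(1-p(Y))\ell(Y)=\mu$. On the complementary event $\{Y=c\}$, which carries probability $\nu^+-\nu$ and matches the atom of $F$ at $\mu$, I set $X_i\equiv\mu$ for every $i$. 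For $i\ge 1$, define the rotation indicators
\[
\xi_i=\lfloor ip(Y)+U\rfloor-\lfloor(i-1)p(Y)+U\rfloor\in\{0,1\},
\]
and set $X_i=r(Y)$ when $\xi_i=1$ and $X_i=\ell(Y)$ when $\xi_i=0$. The whole dependence structure is encoded in the two deterministic maps $i\mapsto\xi_i$ driven by $(Y,U)$.

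The bound \eqref{main1eq1a} falls out of this setup. Writing $N_n=\xi_1+\dots+\xi_n=\lfloor np(Y)+U\rfloor$ (telescoping, using $U\in[0,1)$), we have $S_n=N_n r(Y)+(n-N_n)\ell(Y)$, and substituting $p(Y)(r(Y)-\ell(Y))=\mu-\ell(Y)$ gives the clean algebraic identity
\[
S_n-n\mu=(N_n-np(Y))(r(Y)-\ell(Y)).
\]
The elementary rounding estimate $|N_n-np(Y)|\le 1$ then yields $|S_n-n\mu|\le r(Y)-\ell(Y)=:Z$, and $Z\sim\tilde F$ by Definition \ref{def1}.

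Showing that each $X_i$ has distribution $F$ is the substantive step, and I expect this to be the main obstacle. By translation invariance of Lebesgue measure, $\p(\xi_i=1\mid Y)=p(Y)$ for every $i$, so conditionally on $\{Y>c\}$ the law of $X_i$ is concentrated on $\{\ell(Y),r(Y)\}$ with weights $1-p(Y),\,p(Y)$. To push this to an unconditional statement I would change variables via $\d y=H'(u)\d u=(F^{-1}(u)-\mu)\d u$ under $u=A(y)$ or $u=B(y)$, combined with the explicit density $K'(y)=1/(r(y)-\mu)+1/(\mu-\ell(y))$ on $(c,0)$. The weights $p,1-p$ are not constant in $Y$, but the cancellations are engineered precisely by the defining identity $p(Y)(r(Y)-\ell(Y))=\mu-\ell(Y)$: the contributions from $A(Y)$ and $B(Y)$, after being reweighted, collapse to constant unit density on $[0,\nu)$ and on $(\nu^+,1]$. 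Combined with the mass at $\mu$ supplied by $\{Y=c\}$, the unconditional law of $X_i$ is then exactly $F$, completing the construction.
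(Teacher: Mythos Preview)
Your proposal is correct and is essentially the same construction as the paper's. Your rotation indicators $\xi_i=\lfloor ip(Y)+U\rfloor-\lfloor(i-1)p(Y)+U\rfloor$ coincide with the paper's $\id_{C_i}=\id_{\{\underline{U+iu(Y)}<u(Y)\}}$ (with $p(Y)=u(Y)$), your telescoped count $N_n=\lfloor np(Y)+U\rfloor$ matches the paper's $\sum_{i=1}^n\id_{C_i}\in\{\lfloor nu(Y)\rfloor,\lfloor nu(Y)\rfloor+1\}$, and your sketched change-of-variables argument for $X_i\sim F$ is exactly the content of the paper's Lemma \ref{lem0}; the cancellation you anticipate, $(1-p(y))K'(y)=1/(\mu-\ell(y))$, is precisely what drives that lemma's computation.
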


\begin{proof}
We prove this theorem by construction. Define
$$u(s)=\frac{\mu-F^{-1}(A(s))}{F^{-1}(B(s))-F^{-1}(A(s))}, ~~~s\in (c,0);$$
and in addition we let $u(c)=1/2$.
It is easy to see that $u(s) \in [0,1]$ for $s\in [c,0).$ The following lemma contains a key step in the construction of the   sequence $X_i,~i\in\N$.
\begin{lemma}\label{lem0}
Suppose $F$ is a distribution with mean $\mu$. Let $Y$ be a random variable with distribution $K$ and $U$ be a $\mathrm U[0,1]$ random variable, independent of $Y$. Let
$$
X= A(Y)\id_{\{U\ge u(Y)\}}+B(Y)\id_{\{U< u(Y)\}},
$$
Then $F^{-1}(X)\sim F.$
\end{lemma}
\begin{proof}[Proof of Lemma \ref{lem0}]
Note that $A(Y)< \nu$ and $B(Y)> \nu^+$ when $Y\ne c.$ Hence, the possible values of $X$ are divided into three subsets: $\{X\in [0,\nu)\}={\{U\ge u(Y)\}}\cap \{Y\ne c\}$, $\{X=\nu^+\}=\{Y=c\}$, $\{X\in (\nu^+,1]\}={\{U< u(Y)\}}\cap \{Y\ne c\}$.
For $t\in[0,\nu),$
\begin{align*}
\p(X\le t)&= \p(A(Y)\le t, U\ge u(Y), Y\ne c)\\
&= \p(Y\ge H(t), U\ge u(Y))\\
&= \int_{H(t)}^0 (1-u(y))\d K(y)\\
&= \int_{H(t)}^0 \frac{F^{-1}(B(y))-\mu}{F^{-1}(B(y))-F^{-1}(A(y))}\d (B(y)-A(y)).
\end{align*}
Since $B$ and $A$ are the inverse functions of $H$, we have a.e.
$$\d B(y)=\frac{1}{H'(B(y))}\d y=\frac{1}{F^{-1}(B(y))-\mu}\d y,$$ and $$\d A(y)=\frac{1}{H'(A(y))}\d y=\frac{1}{F^{-1}(A(y))-\mu}\d y.$$ Thus
\begin{align*}
\p(X\le t)&=\int_{H(t)}^0 \frac{F^{-1}(B(y))-\mu}{F^{-1}(B(y))-F^{-1}(A(y))} \frac{F^{-1}(A(y))-F^{-1}((B(y))}{(F^{-1}(B(y))-\mu)(F^{-1}(A(y))-\mu)}\d y\\
&=\int_{H(t)}^0 \frac{1}{\mu-F^{-1}(A(y))}\d y\\
&=\int_{t}^0 \frac{1}{\mu-F^{-1}(s)}(F^{-1}(s)-\mu)\d s\\
&=t.
\end{align*}
Similarly, we can  show that $\p(X> t)=1-t$ for $t\in (\nu^+,1]$. Hence, there exists a random variable $U\sim \mathrm{U}[0,1]$ such that $X=U$ when $U\in [0,\nu)\cup(\nu^+,1].$ It is also easy to see that, when $U\in [\nu,\nu^+],$ we have $X=\nu^+$ and $F^{-1}(U)=\mu=F^{-1}(\nu^+)=F^{-1}(X)$. In conclusion, $F^{-1}(X)=F^{-1}(U)$ a.s. and thus $F^{-1}(X)\sim F$.
\end{proof}

We continue to prove Theorem \ref{main1a}. Let $Y$ be a random variable with distribution $K$ and $U$ be a $\mathrm U[0,1]$ random variable independent of $Y$. For $k\in \N$, define
$$Y_k=A(Y)\id_{\{\underline{U+k u(Y)} \ge u(Y)\}}+B(Y)\id_{\{\underline{U+k u(Y)}< u(Y)\}},$$
and $$X_k=F^{-1}(Y_k),$$
where $\underline x: =x-\lfloor x \rfloor$ is the fractional part of a real number $x$. It is easy to see that $\underline{U+k u(Y)}$  is U$[0,1]$ distributed and is independent of $Y$. Hence, by Lemma \ref{lem0} we know that  $X_k\sim F$, $k\in \N$.

An intuition of this construction is as follows. Denote $W_1=F^{-1}(B(Y))$ and $W_2=F^{-1}(A(Y))$.
As we can see, there are two possibilities for the random variable $X_k$: it is either   $W_1$ (roughly speaking, representing large values of $X_k$) or $W_2$ (representing small values of $X_k$). Note that $u(Y)W_1+(1-u(Y))W_2=\mu$. By constructing random variables $X_k,~ k\in \N$ in this specific way, we aim to let $W_1$ and $W_2$ compensate each other, leading to an $S_n$ that is close to its mean. In the following we complete the proof.

Denote $C_k=\{\underline{U+k u(Y)} < u(Y)\}$ for $k\in \N$. It is easy to see that
$$\id_{C_k}=\#\{N\in \N: N\in (U+(k-1)u(Y),U+ku(Y)]\}.$$
Thus, for $n\in \N$,
 $$\sum_{i=1}^n \id_{C_i}=\#\{N\in \N: N\in (U,U+nu(Y)]\}=\# (\N\cap  (U,U+nu(Y)]).$$
It follows that
$$\lfloor nu(Y)\rfloor \le \sum_{i=1}^n \id_{C_i}\le \lfloor nu(Y)\rfloor+1.$$
 We have that, when $\sum_{i=1}^n \id_{C_i}=\lfloor nu(Y)\rfloor$, or equivalently $U<1-\underline{nu(Y)}$,
\begin{align} S_n&=W_1\sum_{i=1}^n \id_{C_i} +W_2\left(n- \sum_{i=1}^n \id_{C_i}\right) \nonumber \\
&=\lfloor nu(Y)\rfloor W_1+(n-\lfloor nu(Y)\rfloor)W_2
\nonumber\\&=n\left(u(Y) W_1+(1-u(Y))W_2\right)-\underline{nu(Y)}\left(W_1-W_2\right)
\nonumber\\&=n\mu-\underline{nu(Y)}\left(W_1-W_2\right), \label{bounded1}
\end{align}
and  when $\sum_{i=1}^n \id_{C_i}=\lfloor nu(Y)\rfloor+1$, or equivalently $U\ge 1-\underline{nu(Y)}$, that
\begin{align}
S_n&=n\mu-\underline{nu(Y)}\left(W_1-W_2\right)+W_1-W_2\nonumber\\&=n\mu+(1-\underline{nu(Y)})\left(W_1-W_2\right).\label{bounded2} \end{align}
By \eqref{bounded1}-\eqref{bounded2}, we have
\begin{align}
S_n-n\mu=\left(W_1-W_2\right) (\id_{\{U\ge 1-\underline{nu(Y)}\}}-\underline{nu(Y)}). \label{bounded2a}
\end{align}
Thus, we obtain
$
|S_n-n\mu|\le W_1-W_2,
$ and by definition $W_1-W_2=F^{-1}(B(Y))-F^{-1}(A(Y))\sim \tilde F.$
\end{proof}

\begin{remark}
If $F$ does not have a probability mass at $\mu$, $X$ in Lemma \ref{lem0} is U$[0,1]$ distributed, and $Y$ is a continuous random variable on $[c,0]$.
\end{remark}

\begin{remark}
From the proof of Theorem \ref{main1a}, we can see that for $n>m,$ $S_n-S_m=\sum_{i=m}^n X_i$ also satisfies $|S_n-S_m-(n-m)\mu|\le Z.$
In the above proof, the sigma field of $(X_i,~i\in \N)$ is generated by two independent random variables $U$ and $Y$.
\end{remark}

 With the results in Theorem \ref{main1a}, we can answer questions \textbf{(c1)}-\textbf{(c2)} regarding the extremely negative dependence. First, we give a formal definition of END. Recall the two questions given in the introduction: \begin{itemize}
\item[\textbf{(c1)}] $S_n,~ n\in \N$ have variance bounded by a constant;
\item[\textbf{(c2)}] $(S_n-n\mu)/k_n \rightarrow 0$ a.s. for any $k_n\rightarrow \infty$ as $n\rightarrow \infty$.
\end{itemize}
\begin{definition} Consider a sequence of random variables $(X_i, ~i\in \N)$  with common distribution $F$.
We say that $(X_i, ~i\in \N)$ is \emph{extremely negatively dependent} (END), if \textbf{(c2)} holds. Moreover,
we say that $(X_i, ~i\in \N)$ is \emph{strongly extremely negatively dependent} (SEND), if \textbf{(c1)}-\textbf{(c2)}  hold and $$\sup_{n\in \N}\var(S_n)\le \sup_{n\in \N}\var(Y_1+\cdots+Y_n)$$ for any sequence of random variables $(Y_i,~i\in \N)$ with common distribution $F$.
\end{definition}
As discussed in the introduction, the END structure is the opposite to comonotonicity.  The SEND structure can be treated as the most negative correlation between random variables in a sequence, and hence serves as a potential candidate in variance minimization problems and MC simulations. Also note that any finite number of random variables in a sequence does not affect the property of END but they do affect the property of SEND.
\begin{remark}
The criterion of minimizing $\sup_{n\in N}\var(S_n)$ in the definition of an  SEND  sequence can be replaced by another optimization criterion, such as $\sup_{n\in N}\E[g(S_n)]$  or $\limsup_{n\rightarrow\infty}\E[g(S_n)]$ for a convex function $g$. The reason why we choose the variance as the criterion is that it gives a comparison with the classic Central Limit Theorem, and also meets the interests of variance reduction in applied fields.
\end{remark}

Using Theorem \ref{main1a}, we have the following immediate corollary. It gives general bounds for the sum $S_n$ and the existence of an END structure.

\begin{corollary}\label{coro1}  Suppose $F$ is a distribution with mean $\mu$.
\begin{enumerate}[(a)]\item
If the support of $F$ is contained in $[a,b],$ $a,b\in \R$,  then there exist $X_i\sim F$, $i\in \N$ such that for each $n\in \N$,
\begin{equation}|S_n-n\mu|\le b-a.\label{main1eq1}\end{equation}
 \item There exist $X_i\sim F$, $i\in \N$ such that $(S_n-n\mu)/k_n \rightarrow 0$ a.s. for any $k_n\rightarrow \infty$ as $n\rightarrow \infty$.
In other words, there exists an END sequence of random variables from $F$.
\end{enumerate}
\end{corollary}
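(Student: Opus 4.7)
The plan is to deduce both parts directly from Theorem \ref{main1a}, which supplies a single sequence $(X_i)_{i\in\N}$ with $X_i\sim F$ together with one random variable $Z\sim \tilde F$ satisfying
\[
|S_n-n\mu|\le Z \quad \text{for every } n\in\N.
\]
The crucial feature to exploit is that the dominating variable $Z$ does not depend on $n$, so uniform-in-$n$ control of $|S_n-n\mu|$ is essentially free once Theorem \ref{main1a} is in hand.

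For part (a), I would use that when the support of $F$ is contained in $[a,b]$, the generalized inverse satisfies $F^{-1}(t)\in[a,b]$ for every $t\in(0,1]$. By Definition \ref{def1}, $Z=F^{-1}(B(Y))-F^{-1}(A(Y))$ for $Y\sim K$, so $Z\le b-a$ a.s. Combining with the bound from Theorem \ref{main1a} gives $|S_n-n\mu|\le b-a$ for each $n\in\N$, which is \eqref{main1eq1}.

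For part (b), I would invoke the same construction. Since $Z$ is an honest random variable, it is a.s. finite, and therefore for any deterministic sequence $k_n\to\infty$,
\[
\frac{|S_n-n\mu|}{k_n}\le \frac{Z}{k_n}\longrightarrow 0 \quad \text{a.s.}
\]
This is precisely property \textbf{(c2)}, so the sequence $(X_i)_{i\in\N}$ produced by Theorem \ref{main1a} is END, proving existence.

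There is really no substantive obstacle here: the corollary is an immediate reading of Theorem \ref{main1a}. The only small point worth flagging is the a.s.\ finiteness of $Z$ needed for part (b); this holds because $Y$ takes values in $[c,0]$ with $\p(Y=0)=0$ (as $K$ is continuous at $0$), so on the event $\{Y>c\}$ we have $A(Y)\in(0,\nu)$ and $B(Y)\in(\nu^+,1)$, putting both arguments strictly inside $(0,1)$ where $F^{-1}$ is finite, while on $\{Y=c\}$ we already have $Z=0$.
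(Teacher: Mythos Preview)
Your proposal is correct and matches the paper's approach: the paper states this as an ``immediate corollary'' of Theorem \ref{main1a} without supplying a separate proof, and your argument spells out exactly the intended deduction. Your explicit verification that $Z$ is a.s.\ finite (via $\p(Y=0)=0$ and $Z=0$ on $\{Y=c\}$) is a welcome detail that the paper leaves implicit.
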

 \begin{remark}
The above result shows that the sequence of probability measures generated by the sequence $S_n-n\mu, n\in \N$ in
Corollary \ref{coro1} is \emph{tight} (see for example, \cite{B99}, Chapter 1).
\end{remark}

%\begin{remark}
%By the central limit theorem, we know that $\frac1nS_n\rightarrow \mu$ at a rate of $1/\sqrt n$ in probability when $X_i$'s are independent and have finite variance.
%Corollary \ref{coro1} implies that there is one dependence structure among $X_1,X_2,\cdots$ such that $\frac1nS_n$ converges to $\mu$ at a rate of $O(n^{-1})$ when $F$ has bounded support and at a rate of $O(n^{-1})$ a.s. when $F$ has unbounded support.
%\end{remark}

One may wonder about the relationship between $F$ and $\tilde F$. The following lemma gives a link between the moments of both distribution functions.

\begin{lemma}\label{lem2}
If $F$ has finite $k$-th moment, $k>1$, then $\tilde F$ has finite $(k-1)$-st moment.
\end{lemma}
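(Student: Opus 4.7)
The plan is to bound $\E[Z^{k-1}]$, where $Z = F^{-1}(B(Y)) - F^{-1}(A(Y))$, by reducing to one-sided moments of $W_1 := F^{-1}(B(Y)) \ge \mu$ and $W_2 := F^{-1}(A(Y)) \le \mu$ and then controlling those via a two-step change of variables. Writing $Z = (W_1-\mu) + (\mu-W_2)$ as a sum of two nonnegative terms, the inequality $(a+b)^{k-1} \le C_k(a^{k-1} + b^{k-1})$ (with $C_k=1$ for $1<k\le 2$ by subadditivity, and $C_k=2^{k-2}$ for $k\ge 2$ by convexity of $x\mapsto x^{k-1}$) reduces the goal to showing $\E[(W_1-\mu)^{k-1}]<\infty$; the bound on $\E[(\mu-W_2)^{k-1}]$ is symmetric.

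From the proof of Theorem \ref{main1a}, $Y$ has density $B'-A'$ on $(c,0]$, and the atom at $c$ contributes nothing since there $W_1=W_2=\mu$. Set $W(s):=F^{-1}(s)-\mu$, $\alpha(s):=A(H(s))$, and $V(s):=\mu-F^{-1}(\alpha(s))$. The substitution $s=B(y)$, using $dy=W(s)\,ds$, $B'(y)\,dy=ds$, and $-A'(y)\,dy=W(s)/V(s)\,ds$, gives
\begin{equation*}
\E[(W_1-\mu)^{k-1}] \;=\; \int_{\nu^+}^1 W(s)^{k-1}\,ds \;+\; \int_{\nu^+}^1 \frac{W(s)^k}{V(s)}\,ds.
\end{equation*}
The first integral is dominated by $\int_0^1 |F^{-1}(s)-\mu|^{k-1}\,ds = \E|X-\mu|^{k-1}$, finite via Jensen's inequality from $\E|X|^k<\infty$.

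For the second integral, a further substitution $t=\alpha(s)$ (with inverse $\beta$) yields $\int_{\nu^+}^1 W(s)^k/V(s)\,ds = \int_0^\nu (F^{-1}(\beta(t))-\mu)^{k-1}\,dt$. Since $\beta$ is strictly decreasing, $F^{-1}(\beta(t))-\mu>z$ iff $t<\alpha(F(\mu+z))$, so layer-cake rewrites this as $(k-1)\int_0^\infty z^{k-2}\alpha(F(\mu+z))\,dz$. The decisive input is
\begin{equation*}
\alpha(s)\,V(s) \;\le\; |H(s)| \;=\; \int_0^{\alpha(s)}(\mu-F^{-1}(u))\,du \;=\; \int_{F^{-1}(s)}^\infty (x-\mu)\,dF(x),
\end{equation*}
where the inequality comes from $\mu-F^{-1}(u)\ge V(s)$ on $[0,\alpha(s)]$ by monotonicity of $\mu-F^{-1}$.

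For non-degenerate $F$ (the degenerate case gives $Z=0$ trivially), pick $z_0>0$ with $\delta:=V(F(\mu+z_0))>0$; since $V(F(\mu+\cdot))$ is nondecreasing, $V(F(\mu+z))\ge\delta$ for all $z\ge z_0$. Split at $z_0$: the integral on $[0,z_0]$ is at most $\nu\,z_0^{k-1}/(k-1)$ using $\alpha\le\nu$, while the integral on $[z_0,\infty)$ is bounded via the key inequality by $\delta^{-1}\int_{z_0}^\infty z^{k-2}\E[(X-\mu)I_{X>\mu+z}]\,dz$, which by Fubini equals $\delta^{-1}\E[(X-\mu)_+^k]/(k-1)<\infty$. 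Combining the pieces yields $\E[Z^{k-1}]<\infty$. The main obstacle is the singularity of $W(s)^k/V(s)$ as $s\to\nu^+$; the layer-cake reformulation, combined with splitting at $z_0$ and using the bound $\alpha V\le|H|$, is what resolves it.
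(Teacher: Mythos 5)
Your proof is correct, but it takes a genuinely different route from the paper's. The paper works probabilistically: normalizing $\mu=0$ and applying the identity \eqref{bounded2a} with $n=1$, it computes $\E[|X_1-\mu|^k]$ exactly as $\E\bigl[\tfrac{W_1(-W_2)}{W_1-W_2}\bigl(W_1^{k-1}+(-W_2)^{k-1}\bigr)\bigr]$, lower-bounds this by $\tfrac12\E\bigl[\min\{W_1,|W_2|\}(\max\{W_1,|W_2|\})^{k-1}\bigr]$, and then uses the comonotonicity of $W_1$ and $|W_2|$ (both monotone in $Y$) to factor $\E[\min\cdot\max^{k-1}]\ge\E[\min]\,\E[\max^{k-1}]$, concluding since $\E[\min\{W_1,|W_2|\}]>0$ in the non-degenerate case and $Z\le 2\max\{W_1,|W_2|\}$. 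You instead work analytically and directly from Definition \ref{def1}: you never use \eqref{bounded2a} or the auxiliary uniform $U$, but rather the explicit law of $Y$ and two changes of variables, arriving at $\E[(W_1-\mu)^{k-1}]=\int_{\nu^+}^1 W(s)^{k-1}\,\d s+\int_{\nu^+}^1 W(s)^k/V(s)\,\d s$, and you tame the $1/V$ singularity at $\nu^+$ by the layer-cake representation, the bound $\alpha(s)V(s)\le|H(s)|$, and a split at a point $z_0$ where $V(F(\mu+z_0))>0$. The paper's argument is shorter and dodges the change-of-variables technicalities; yours is self-contained at the level of the residual distribution, produces explicit integral formulas for the one-sided $(k-1)$-st moments, and makes visible exactly where one moment order is lost (the $1/V$ blow-up when the lower tail is bounded but the upper tail is not), which nicely matches Remark \ref{remark2.5}. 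Both proofs hinge on an analogous non-degeneracy constant: your $\delta=V(F(\mu+z_0))>0$ plays the role of the paper's $\E[\min\{W_1,|W_2|\}]>0$. Two cosmetic points: the displayed identity $|H(s)|=\int_{F^{-1}(s)}^{\infty}(x-\mu)\,\d F(x)$ and the layer-cake ``equality'' are exact only up to atoms of $F$ (and the truncation of $\alpha(F(\mu+z))$ at $\nu$), but in each instance the inequality you actually use, namely $|H(F(\mu+z))|\le\E[(X-\mu)\id_{\{X>\mu+z\}}]$ and an upper bound on the Lebesgue measure of the level set, goes in the right direction, so nothing breaks; it would be worth one extra sentence justifying the existence of $z_0$ (non-degeneracy gives $F(\mu)<1$, hence $F(\mu+z_0)>\nu^+$ for some $z_0$, hence $\alpha(F(\mu+z_0))<\nu$ and $\delta>0$).
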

\begin{proof}Without loss of generality, we assume $\mu=0$. We use the notation $W_1$ and $W_2$ as in the proof of Lemma \ref{lem2}. Note that  by definition,  $W_1\ge0$, $W_2\le 0$, and $\E[\min\{W_1,|W_2|\}=0$ if and only if $W_1=0=W_2$ (the lemma holds trivially in this case). In the following we assume $\E[\min\{W_1,|W_2|\}]>0$.

By \eqref{bounded2a}, setting $n=1$, we have \begin{align}\label{momenteq2}\nonumber &\E[|X_1-\mu|^k]\\&=\E[|S_1-\mu|^k]
\nonumber\\&=\E[(W_1-W_2)^k |\id_{\{U\ge 1- u(Y)\}}- u(Y)|^k]
\nonumber\\
&=\E[(W_1-W_2)^k \E[|\id_{\{U\ge 1- u(Y)\}}- u(Y)|^k|Y]]
\nonumber\\&=\E\left[(W_1-W_2)^k u(Y)( 1-u(Y)) \left((1-u(Y))^{(k-1)}+u(Y)^{(k-1)}\right)\right]
\nonumber\\&=\E\left[(W_1-W_2)^k \frac{-W_2}{W_1-W_2}\frac{W_1}{W_1-W_2}\left(\left(\frac{W_1}{W_1-W_2}\right)^{(k-1)}+\left(\frac{-W_2}{W_1-W_2}\right)^{(k-1)}\right)\id_{\{W_2\ne W_1\}}\right]
\nonumber\\
&=\E\left[\frac{-W_2W_1}{W_1-W_2}(W_1^{k-1}+(-W_2)^{k-1})\id_{\{W_2\ne W_1\}}\right]
\nonumber\\
&\ge \E\left[\frac{-W_2W_1}{W_1-W_2}(\max\{W_1, |W_2|\})^{k-1}\id_{\{W_1>0\}}\right]
\nonumber\\
&\ge \E\left[\frac{\max\{W_1, |W_2|\} \min\{W_1, |W_2|\}}{2\max\{W_1, |W_2|\})}(\max\{W_1, |W_2|\})^{k-1}\id_{\{W_1>0\}}\right]
\nonumber\\&=\E\left[\frac{1}{2}\min\{W_1, |W_2|\}(\max\{W_1, |W_2|\})^{k-1}\id_{\{W_1>0\}}\right]
\nonumber\\&= \E\left[\frac{1}{2}\min\{W_1, |W_2|\}(\max\{W_1, |W_2|\})^{k-1}\right].
\end{align}
Since $\E[|X_1-\mu|^k]$ is finite, $\E\left[\min\{W_1, |W_2|\}(\max\{W_1, |W_2|\})^{k-1}\right]$ is finite.   Note that $W_1$ and $|W_2|$ are comonotonic by definition, hence \begin{equation}\label{momenteq}\E\left[\min\{W_1, |W_2|\}(\max\{W_1, |W_2|\})^{k-1}\right]\ge \E[\min\{W_1, |W_2|\}] \E[\max\{W_1, |W_2|\})^{k-1}].\end{equation} Recall that we assume $\E[\min\{W_1, |W_2|\}]>0$, and hence $\E[(\max\{W_1,|W_2|\})^{k-1}]<\infty$ follows from \eqref{momenteq2}-\eqref{momenteq}.   Finally, by definition, $W_1-W_2$ has  distribution $\tilde F$, thus $\tilde F$ has finite $(k-1)$-st moment.
\end{proof}

\begin{remark}\label{remark2.5}
From \eqref{momenteq2}, we can see that if the distribution functions of $W_1$ and $|W_2|$ are asymptotically equivalent (i.e. $\p(W_1>x)/\p(|W_2|>x)=O(1)$ and $\p(|W_2|>x)/\p(W_1>x)=O(1)$ as $x \rightarrow \infty$), then the finiteness of  the $k$-th moment of $F$ actually  implies the finiteness  of  the $k$-th moment of $\tilde F$. When one of $W_1$ and $W_2$ is bounded but the other one is unbounded, only the  finiteness  of  the $(k-1)$-st moment of $\tilde F$ is guaranteed. The relation \eqref{momenteq2} is sharp in the sense that the two inequalities used in \eqref{momenteq2} are tight inequalities which at most reduce the quantity by three fourths.
\end{remark}
\begin{proposition}\label{main3} Suppose $F$ is a distribution with mean $\mu$, and $F$ has finite $m$-th moment, $m>1$. Then there exist $X_i\sim F$, $i\in \N$ such that uniformly in $n\in \N$, as $k\rightarrow \infty$,
\begin{equation}\label{unbounded3}\p(|S_n-n\mu| >k )=
o(k^{-{m+1}}).\end{equation}
In particular,  as $n\rightarrow \infty$, for all $\epsilon>0$,
\begin{equation}\label{unbounded4}\p(|S_n-n\mu| <n^{\epsilon} )=1-o(n^{-(m-1)\epsilon}).\end{equation}
\end{proposition}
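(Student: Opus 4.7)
The plan is to combine Theorem \ref{main1a} and Lemma \ref{lem2} with a standard tail--moment argument. The starting point is Theorem \ref{main1a}, which produces, for the given $F$, random variables $X_i \sim F$ and a \emph{single} random variable $Z \sim \tilde F$, not depending on $n$, such that $|S_n - n\mu| \le Z$ for every $n \in \N$. This is the decisive simplification: the desired \emph{uniform in $n$} tail bound for $S_n - n\mu$ reduces to a tail bound for the one fixed random variable $Z$.

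Next I would invoke Lemma \ref{lem2}: since $F$ has finite $m$-th moment with $m>1$, the residual distribution $\tilde F$ has finite $(m-1)$-st moment, i.e.\ $\E[Z^{m-1}]<\infty$. The key analytic fact is that any nonnegative random variable $Z$ with $\E[Z^{m-1}]<\infty$ satisfies $k^{m-1}\p(Z>k)\to 0$ as $k\to\infty$; this follows at once from the inequality $k^{m-1}\p(Z>k) \le \E\bigl[Z^{m-1}\id_{\{Z>k\}}\bigr]$ and dominated convergence applied to the integrable random variable $Z^{m-1}$. Since $\p(|S_n-n\mu|>k)\le \p(Z>k)$ for every $n$, this gives \eqref{unbounded3} uniformly in $n$.

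For \eqref{unbounded4} I would simply substitute $k = n^{\epsilon}$ in \eqref{unbounded3} and pass to complements: the right--hand side becomes $o\bigl((n^{\epsilon})^{-(m-1)}\bigr) = o(n^{-(m-1)\epsilon})$, and $\p(|S_n-n\mu|<n^{\epsilon}) = 1 - \p(|S_n-n\mu|\ge n^{\epsilon})$ then yields the claim.

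There is no real obstacle here since all the heavy machinery has been built in Theorem \ref{main1a} and Lemma \ref{lem2}; the only non-trivial step is the observation that finiteness of $\E[Z^{m-1}]$ upgrades the Markov-type bound $\p(Z>k) = O(k^{-(m-1)})$ to little-$o$, which is where the hypothesis $\E[Z^{m-1}]<\infty$ is used in full (rather than merely $\E[Z^{m-1-\delta}]<\infty$).
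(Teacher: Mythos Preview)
Your proof is correct and follows exactly the same approach as the paper: invoke Theorem \ref{main1a} to dominate $|S_n-n\mu|$ by a single $Z\sim\tilde F$, use Lemma \ref{lem2} to get $\E[Z^{m-1}]<\infty$, and then apply the standard fact that a finite $(m-1)$-st moment forces $k^{m-1}\p(Z>k)\to 0$. The paper's proof is just a two-sentence compression of precisely this argument.
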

\begin{proof}
The finiteness of the $(m-1)$-th moment of $\tilde F$ guarantees that $x^{m-1} (1-\tilde F(x))\rightarrow 0$ as $x\rightarrow \infty$.  Hence, by Theorem \ref{main1a}, we have that $\p(|S_n-n\mu| >k )\le 1-\tilde F(k)=
o(k^{-{m+1}})$.\end{proof}

To seek for a possible SEND sequence, we present a link between the variances of $F$ and $\tilde F$.
\begin{proposition} \label{propvar} Suppose $\tilde F$ has finite variance. Then there exist $X_i\sim F,~ i\in\N$ and $Z \sim \tilde F$  such that for $n\in \N$,
$$\var(S_n)\le \frac14\E[Z^2].$$
In particular, for such $X_i\sim F,~ i\in\N$, we have that
\begin{enumerate}[(a)]
\item  $\var(S_n)\le  (b-a)^2/4 $ if $F$ is supported on $[a,b]$, $a,b\in \R$;
\item  $\var(S_n)\le C$ for some constant $C$ that does not depend on $n$  if $F$ has finite third moment, and
\item the sequence  $X_i\sim F,~ i\in\N$ is SEND if $\var(X_1)=\E[Z^2]/4.$
\end{enumerate}
\end{proposition}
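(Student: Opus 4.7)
The plan is to exploit the explicit representation of $S_n - n\mu$ coming from equation \eqref{bounded2a} in the proof of Theorem \ref{main1a}. Recall that with the END construction there, we have
$$S_n - n\mu = (W_1 - W_2)\bigl(\id_{\{U \ge 1 - \underline{nu(Y)}\}} - \underline{nu(Y)}\bigr),$$
where $W_1 = F^{-1}(B(Y))$, $W_2 = F^{-1}(A(Y))$ are functions of $Y$ alone and $U$ is uniform on $[0,1]$ independent of $Y$. So I would take these same $X_i$ and the same $Z = W_1 - W_2 \sim \tilde F$.

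The first key step is to condition on $Y$. Given $Y$, the random variable $\id_{\{U \ge 1 - \underline{nu(Y)}\}}$ is Bernoulli with parameter $p := \underline{nu(Y)}$, and $W_1, W_2$ are constants. A direct computation then gives
$$\E\bigl[(S_n - n\mu)^2 \,\bigm|\, Y\bigr] = (W_1 - W_2)^2 \, p(1-p) \le \tfrac14 (W_1 - W_2)^2,$$
using the elementary bound $p(1-p) \le 1/4$ on $[0,1]$. Taking expectations and noting that $\E[S_n - n\mu] = 0$ (so $\var(S_n) = \E[(S_n - n\mu)^2]$) yields $\var(S_n) \le \tfrac14 \E[Z^2]$, the desired main bound. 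This is the only nontrivial step; the estimate $p(1-p)\le 1/4$ is exactly what converts the deterministic bound $|S_n-n\mu|\le Z$ from Theorem \ref{main1a} into the sharper second-moment bound with the factor $1/4$.

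Parts (a) and (b) then follow immediately. For (a), if $F$ is supported in $[a,b]$ then $W_1,W_2 \in [a,b]$ by construction, so $Z = W_1 - W_2 \le b - a$ pointwise, and $\var(S_n) \le (b-a)^2/4$. For (b), the finite third moment of $F$ yields, via Lemma \ref{lem2} with $k=3$, that $\tilde F$ has finite second moment, so $C := \E[Z^2]/4 < \infty$ works uniformly in $n$.

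Finally, for (c), the observation is that for any sequence $(Y_i)$ with common distribution $F$ one trivially has $\sup_{n\in\N}\var(Y_1+\cdots+Y_n) \ge \var(Y_1) = \var(X_1)$. Under the hypothesis $\var(X_1) = \E[Z^2]/4$, the main bound gives $\sup_{n\in\N}\var(S_n) \le \E[Z^2]/4 = \var(X_1) \le \sup_{n\in\N}\var(Y_1+\cdots+Y_n)$, so the constructed sequence is SEND by definition. No obstacle here; the work is entirely in the conditional second-moment calculation above.
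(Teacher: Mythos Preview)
Your proof is correct and follows essentially the same approach as the paper: you invoke \eqref{bounded2a}, condition on $Y$ to reduce to a Bernoulli variance $p(1-p)\le 1/4$, and then deduce (a)--(c) exactly as the paper does (bounding $Z\le b-a$, applying Lemma \ref{lem2} with $k=3$, and comparing $\sup_n\var(S_n)$ to $\var(Y_1)$). There is nothing to add.
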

\begin{proof} We use the notation $W_1$ and $W_2$  as in the proof of Lemma \ref{lem2}, and let $Z=W_1-W_2$. By \eqref{bounded2a},
\begin{align*}\var(S_n)&=\E[(S_n-n\mu)^2]
\\&=\E[(W_1-W_2)^2 (\id_{\{U\ge 1-\underline{nu(Y)}\}}-\underline{nu(Y)})^2]
\\
&=\E[(W_1-W_2)^2 \E[(\id_{\{U\ge 1-\underline{nu(Y)}\}}-\underline{nu(Y)})^2|Y]]
\\&=\E[(W_1-W_2)^2\underline{nu(Y)}( 1-\underline{nu(Y)})]
\\&\le \frac14\E[(W_1-W_2)^2].
\end{align*}
The results follow from this:
\begin{enumerate}[(a)]
\item  This can be seen from the fact that $0\le Z=W_2-W_1\le |b-a|$.
\item
By Lemma \ref{lem2},  when $F$ has finite third moment, $Z\sim \tilde F$ has finite second moment. Thus, $\var(S_n)\le  \E[Z^2]/4<C. $
\item For any sequence $Y_i\sim F,~ i\in\N$, $$\sup_{n\in \N}\var(Y_1+\cdots+Y_n)\ge \var(Y_1)=\frac14\E[Z^2]\ge \sup_{n\in \N}\var(S_n).$$ Hence, $X_i\sim F,~ i\in\N$ are SEND.
%By \eqref{bounded2a}, setting $n=1$, we have \begin{align*}\var(X_1)&=\E[(S_1-\mu)^2]
%\\
%&=\E[(W_1-W_2)^2u(Y)(1-u(Y))]
%\\
%&=\E\left[(W_1-W_2)^2\frac{(\mu-W_2)(W_1-\mu)}{(W_1-W_2)^2}\id_{\{W_1\neq W_2\}}\right]
%\\
%&=\E[( (\mu-W_2)(W_1-\mu) ].
%\end{align*}
\end{enumerate}
\end{proof}
Finding sequences of random variables  with small total variance (such as the END sequence) is a classical question  in
variance reduction and simulation (see for example \cite{F72}).  It is
 especially important in Monte-Carlo (MC) and Quasi Monte-Carlo (QMC) simulation (for instance, see \cite{G06} for (Q)MC methods and their applications in finance), where typically a dependence structure is chosen to generate a random sample such that the error $|S_n/n-\mu|$ is approximately $a/{\sqrt{n}}$ with a small value of $a$. QMC techniques, such as low-discrepancy methods, aim for an error of order $O(n^{-(1-\epsilon)})~,\epsilon>0,$ by choosing (usually deterministic) discretization points. In our paper, we give a dependence structure which generates a random sample with an asymptotic error  of order $O(1/n)$ which significantly improves the convergence rate. Of course, the details of possible new random sample generation techinques, as well as the setup for high-dimensionality, need further research.

Yet, when $\var(X_1)<\E[Z^2]/4,$ it remains unclear to find an SEND sequence. From the examples in the next section, we would say that the bound $\E[Z^2]/4$ already gives  good estimates of the smallest variance of $S_n$ in general.

We conclude this section by a final remark on the variance of $S_n$ under the three different dependencies.
 As long as the third moment of $F$ is finite,
\begin{itemize}
\item if $X_i, ~i\in \N$ are independent, $\var(S_n)=O(n)$, and $(S_n-n\mu)/\sqrt n\lawcn $Normal;
\item if $X_i, ~i\in \N$ are comonotonic, $\var(S_n)=O(n^2)$ and $S_n/n\buildrel{\mathrm{a.s.}} \over = X_1\sim F$;
\item if $X_i, ~i\in \N$ are END, $\var(S_n)=O(1)$ and $(S_n-n\mu)/n^{\epsilon} \buildrel{\mathrm{a.s.,}~ L_2} \over{\longrightarrow} 0$ for any $\epsilon>0$.
\end{itemize}

\subsection{Examples}

In this section we give some examples of distributions $F$ and their residual distributions $\tilde F$. These examples show that some of the bounds given in Section 2.1 are sharp in the most general sense.
\begin{example}\label{ex1}
Suppose $F$ is a Bernoulli distribution on $\{0,1\}$ with parameter $p\in (0,1)$: $$F(x)=(1-p)\id_{\{x\ge 0\}}+p \id_{\{x\ge 1\}}.$$Then
$$H(s)=-ps\id_{\{0\le s\le 1-p\}}-(1-p)(1-s)\id_{\{1-p<s\le 1\}},~~~ s\in[0,1],$$
$H(s)$ attains its minimum at $H(1-p)=-p(1-p),$ and
$$A(t)=-\frac{t}{p}, ~~~B(t)=1+\frac{t}{1-p}, ~~~t\in [-p(1-p),0].$$
Therefore, $F^{-1}(B(t))=1$, $F^{-1}(A(t))=0$ for all $t \in (-p(1-p),0)$. This leads to $F^{-1}(B(Y))-F^{-1}(A(Y))=1$ a.s. Thus, $\tilde F$ is a degenerate distribution at 1,
and there exists a sequence of $X_1,X_2,\ldots$ with common distribution $F$ such that  for all $n\in \N$,
\begin{equation}\label{ex1b} |S_n-np|\le 1.\end{equation}
\end{example}

 \begin{remark} We consider two special cases of the above example. \begin{enumerate}\item The bound \eqref{ex1b} cannot be improved for an irrational $p$. Suppose $p$ in the above example  is an irrational number and let $X_1, X_2, \ldots$ be a sequence of random variables from $F$. It is obvious that $S_n$ is an integer, and $np$ is an irrational number. Since $\E[S_n]=np$, we must have $\p(S_n\le \lfloor np\rfloor)>0$ and  $\p(S_n\ge \lfloor np\rfloor+1)>0$. Thus, $\p(|S_n-np|\ge \underline{np})>0.$ Since $\{\underline{np}:n\in\N\}$ is dense in $[0,1]$, we have that for any $q\in[0,1)$, there are infinitely many $n\in\N$ such that $\p(|S_n-np|\ge q)>0.$ Hence, $|S_n-np|\le q$ with $q<1$ for all $n\in \N$ is impossible. This also confirms that for a distribution on $[a,b]$, the bound \eqref{main1eq1} given in Corollary \ref{coro1} (a) is sharp in general.
\item  When $p=1/2$, we have $\var(X_1)=1/4=\E[Z^2]/4$ where $Z\sim \tilde F$.  That is, the inequality in Proposition \ref{propvar} is an equality for $n=1$. By Proposition \ref{propvar} (c), the sequence $X_1,X_2,\ldots$ is SEND in the case of $p=1/2$.
\end{enumerate}
\end{remark}

\begin{example}
Suppose $F$ is a uniform distribution on $[0,1]$. Then
$$H(s)=\frac{1}{2}s(s-1),~~~ s\in[0,1],$$
$H(s)$ attains its minimum at $H(1/2)=-1/8,$ and
$$A(t)=\frac{1-\sqrt{1-8t}}{2}, ~~~B(t)=\frac{1+\sqrt{1-8t}}{2} , ~~~t\in \left[-\frac18,0\right].$$
Note that $Z:=F^{-1}(B(Y))-F^{-1}(A(Y))=B(Y)-A(Y),$ and $Y$ is a continuous random variable with distribution function $B-A$. It follows that $Z$ is U$[0,1]$ distributed. Therefore, $\tilde F$ is U$[0,1]$ and
and there exists a sequence of $X_1,X_2,\ldots$ with common distribution $F$ such that  for all $n\in \N$,
$$|S_n-n/2|\le Z.$$
\end{example}

\begin{remark} In the above example, $\var(X_1)=1/12$ and $\E[Z^2]=1/3.$  By Proposition \ref{propvar} (c), the sequence $X_1,X_2,\ldots$ is SEND.
\end{remark}

\begin{example}
Suppose $F$ is a Pareto distribution with index $\alpha=2$:
$$F(x)=1-x^{-2}, ~~x\ge 1.$$
Then $F^{-1}(t)=(1-t)^{-1/2},~ t\in (0,1),$ and $\mu=2$.
$$H(s)=2(1-s)-2\sqrt{1-s},~~~ s\in[0,1],$$
$H(s)$ attains its minimum at $H(3/4)=-1/2,$ and
$$A(t)=\frac{1-t-\sqrt{1+2t}}{2}, ~~~B(t)=\frac{1-t+\sqrt{1+2t}}{2} , ~~~t\in \left[-\frac12,0\right].$$
Note that for  $t\in \left[-\frac12,0\right]$
\begin{align}\nonumber F^{-1}(B(t))-F^{-1}(A(t))&=\frac{\sqrt2}{(1+t-\sqrt{1+2t})^{1/2}}-\frac{\sqrt2}{(1+t+\sqrt{1+2t})^{1/2}},
\\&=\frac{\sqrt 2((1+t+\sqrt{1+2t})^{1/2}-(1+t-\sqrt{1+2t})^{1/2})}{-t}
,\label{Y1}\end{align}
and
$$B(t)-A(t)=\sqrt{1+2t}.$$
$Y$ is a continuous random variable with distribution function $B-A$, and hence the inverse distribution function of $Y$ is $({s^2-1})/{2}, ~s\in (0,1)$. Thus we can write $Y=({U^2-1})/{2}$ where $U$ is a U$[0,1]$ random variable. Plugging it in \eqref{Y1}, we get
\begin{align*}Z:=F^{-1}(B(Y))-F^{-1}(A(Y))&=\frac{\sqrt 2((1+\frac{U^2-1}2+U)^{1/2}-(1+\frac{U^2-1}2-U)^{1/2})}{\frac{1-U^2}{2}}
\\&=\frac{2((U^2+1+2U)^{1/2}-(U^2+1-2U)^{1/2})}{{1-U^2}}
\\&=\frac{4U}{1-U^2}.\end{align*}
It follows that $$\tilde F(x)=\p(Z\le x)=\sqrt{1+\frac 4{x^2}}-\frac{2}{x},~~~ x\ge 0,$$ and
the tail of $\tilde F$ is Pareto-type with index 1.
There exists a sequence of $X_1,X_2,\ldots$ with common distribution $F$ such that  for all $n\in \N$,
$$|S_n-2n|\le Z.$$
Note that $Z\le 2/({1-U})$, hence $|S_n-2n|$ is controlled by another Pareto random variable, $2/(1-U)$, with index 1.
\end{example}

\begin{remark} In the above example, $F$ has finite $(2-\epsilon)$-th moment for all $\epsilon>0$, and $\tilde F$ has finite $(1-\epsilon)$-th moment for all $\epsilon>0$. This confirms the sharpness of the moment relation in Lemma \ref{lem2} for one-side bounded distributions (see Remark \ref{remark2.5}).
\end{remark}

\subsection{Extreme negative dependence and complete mixability}

There are of course more  ways to construct END sequences. One of them is through the idea of  completely mixability (CM), which is linked to  a ``perfect" negative dependence structure.

\begin{definition}[\cite{WW11}]
A univariate distribution $F$ is $n$-\emph{completely mixable} ($n$-CM) if  there exist  $X_i\sim F$, $i=1,\ldots,n$ such that $X_1+\cdots+X_n$ is a constant (or a.s. a constant). The vector $(X_1,\ldots,X_n)$ is called a \emph{complete mix}.
\end{definition}

Some straightforward examples and properties of CM distributions can be found in~\cite{WW11}  and \cite{PWW12}. For a fixed $n$, a complete mix is usually regarded as having the most negative correlation, in the sense that $\var(S_n)=0$. There are at least three major differences between the complete mix and the END sequence in Theorem \ref{main1a}.
\begin{enumerate}[(i)]
\item The complete mixability is a property of the marginal distribution $F$. For a general distribution $F$, it may or may not be CM; thus a complete mix might not exist in some cases.  On the other hand, for $F$ with finite mean, there always exists an END sequence by Theorem \ref{main1a}.
\item The END scenario allows the existence of a sequence of $X_i$, $i\in \N$ which has a global negative dependence, while the complete mix has a negative dependence only for a fixed $n$. For example, we know the uniform distribution U$[0,1]$ is $n$-CM for any $n\ge 2$. However, it is impossible to construct a sequence $X_i$, $i\in \N$ such that $\var(S_n)=0$ for all $n\ge 2$, since $\var(S_n)=0$ implies that $\var(S_{n+1})>0$. Hence, the complete mixability does not directly apply to negatively dependent sequences.
\item By Theorem \ref{main1a}, we find the END sequence by construction.  However, as pointed out in \cite{WW11}, even when the complete mixability of $F$ is shown, it remains often unclear  to construct a complete mix with marginal distribution $F$ (this is one of the open questions in complete mixability).
\end{enumerate}

In the following we connect the concepts of CM and END in a simple way.

\begin{proposition}
Suppose $F$ is $n$-CM, for some $n\in \N$ and $(Y_1,\ldots,Y_n)$ is a complete mix with marginal distribution $F$. Then the sequence of random variables $X_i,~i\in \N$ where $X_{i+(k-1)n}=Y_i$ for $i=1,\ldots,n$ and $k\in \N$ is END. Moreoever, if the variance of $F$ is finite, then \textbf{(c1)} also holds, and in addition, if $n=2$, then the sequence $X_i,~i\in \N$ is SEND.
\end{proposition}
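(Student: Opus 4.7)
The plan is to exploit the periodic structure forced by the complete mix. Writing $m=kn+j$ with $k\in\N\cup\{0\}$ and $j\in\{0,1,\ldots,n-1\}$, I would first observe that because $(Y_1,\ldots,Y_n)$ is a complete mix of $F$, one has $Y_1+\cdots+Y_n=n\mu$ almost surely. Since the construction literally reuses the same realization $(Y_1,\ldots,Y_n)$ across every block, this gives $S_m=kn\mu+(Y_1+\cdots+Y_j)$ a.s., and hence $S_m-m\mu=T_j$, where $T_j:=Y_1+\cdots+Y_j-j\mu$ with $T_0:=0$. The critical point is that $j$ takes only $n$ possible values, so the single random variable $Z:=\max_{0\le j\le n-1}|T_j|$ dominates $|S_m-m\mu|$ uniformly in $m$; as a finite maximum of finite sums of a.s.-finite random variables, $Z$ is itself a.s.\ finite.

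With this uniform envelope in hand, property (c2) follows immediately: for any deterministic $k_m\to\infty$, $|S_m-m\mu|/k_m\le Z/k_m\to 0$ a.s., so the sequence is END. For (c1), if $F$ has finite variance then each $T_j$ has finite variance, and the constant $kn\mu$ contributes nothing, so $\var(S_m)=\var(T_j)$ and $\sup_{m\in\N}\var(S_m)=\max_{0\le j\le n-1}\var(T_j)<\infty$.

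For the SEND claim in the case $n=2$, the complete mixability forces $Y_1+Y_2=2\mu$ a.s., so the sequence is $Y_1,Y_2,Y_1,Y_2,\ldots$, which gives $\var(S_m)=\var(Y_1)=\sigma^2$ for odd $m$ and $\var(S_m)=0$ for even $m$, whence $\sup_{m\in\N}\var(S_m)=\sigma^2$, where $\sigma^2$ denotes the variance of $F$. On the other hand, any sequence $(Y'_i)$ with $Y'_i\sim F$ satisfies $\sup_{n\in\N}\var(Y'_1+\cdots+Y'_n)\ge\var(Y'_1)=\sigma^2$, so the SEND inequality $\sup_n\var(S_n)\le\sup_n\var(Y'_1+\cdots+Y'_n)$ holds.

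I do not expect any genuine obstacle: once one reads the construction as reusing the same block (rather than independent copies of it), the partial sums telescope modulo $n$ and the rest is bounded bookkeeping. The only mild subtlety worth flagging in the writeup is emphasizing that the envelope $Z$ is pathwise, not merely distributional, which is what turns a tightness statement into the a.s.\ statement (c2).
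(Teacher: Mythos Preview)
Your argument is correct and is essentially the same as the paper's: both identify $S_m-m\mu$ with one of the finitely many partial sums $T_j=Y_1+\cdots+Y_j-j\mu$, bound it pathwise by the envelope $Z=\max_j|T_j|$, and for $n=2$ compare $\sup_m\var(S_m)=\var(Y_1)$ against the trivial lower bound $\var(Y_1')$ for any competing sequence. Your write-up is in fact a bit more explicit than the paper's (which compresses the whole computation into the single inequality \eqref{CMEND}), but there is no substantive difference in method.
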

\begin{proof}
It is  by definition that $Y_1+\cdots+Y_n=n\mu.$ We can see that by the construction of $X_i,~i\in \N$,  for any $m \in \N$, \begin{equation}|S_m-m\mu|\le \max_{i=1,\ldots,n}|Y_1+\cdots+Y_i-i\mu|.\label{CMEND}\end{equation} Hence, $S_m-m\mu$ is controlled by a random variable, which leads to the END property. It is obvious that when $F$ has a finite variance, \textbf{(c1)} holds. When $n=2$, the right-hand side of \eqref{CMEND} is either $|X_1-\mu|$ or $0$, and hence $\sup_{m\in \N}\var(S_m)\le \var(X_1)$, leading to the SEND property.
\end{proof}
Since only symmetric distributions are 2-CM (\cite{WW11}, Proposition 2.3), using CM distributions to find SEND sequences for a general $F$ may not be possible. Also, it is known to be challenging to prove complete mixability for any non-trivial class of distributions. On the other hand, our results such as Theorem \ref{main1a} do not require any additional information on the marginal distribution other than a finite mean.

On the other hand, to establish complete mixability from the END sequence is also not easy. Recall that in Theorem \ref{main1a},
$$S_n-n\mu=\left(W_1-W_2\right) (\id_{\{U\ge 1-\underline{nu(Y)}\}}-\underline{nu(Y)}),$$
where $u(Y)=(\mu-W_2)/(W_1-W_2).$
Note that for a distribution $F$ with no probability mass at $\mu$, $W_1-W_2>0$ a.s. Hence,  for $S_n$ to be a.s. a constant for a fixed $n\in \N$, one typically needs $\underline{nu(Y)}=0$ a.s. However, this requires $n(\mu-W_2)/(W_1-W_2)$ to be a.s. an integer, which is only satisfied by very specific cases of distributions $F$. One example is the symmtric distributions, where by symmetry $\mu-W_1=W_2-\mu$ a.s. and $u(Y)=1/2$ a.s. The above arguments impliy that symmtric distributions are $n$-CM for any even number $n$. This is one of the first few straightforward examples given in the theory of complete mixability (see Proposition 2.3 of \cite{WW11}). One can get similar results for the cases $u(Y)=1/k$ a.s. for some $k\in \N$, depending on different conditions of the symmetry of $F$.

\section{Applications in Risk Aggregation}

In quantitative risk management, when the marginal distributions of $X_1,\ldots,X_n$ are known but the joint distribution is unknown, risk regulators and  managers are interested in the extreme values for quantities related to an aggregate position $S_n=X_1+\cdots+X_n$ such as risk measures of $S_n$. In this section, we apply our main results to the extreme scenarios in risk management with dependence uncertainty.

\subsection{Risk aggregation with dependence uncertainty}

In the framework of risk aggregation with dependence uncertainty,
it is considered that for each $i=1,\ldots,n$ the distribution of $X_i$ is known while the joint distribution of $\mathbf{X}:=(X_1,X_2,\ldots,X_n)$ is unknown. Such setting is  practical in quantitative risk management, as statistical modeling for the dependence structure (copula) is extremely difficult especially when $n$ is relatively large. The interested reader is referred to \cite{EPR13},  \cite{BJW13} and the references therein for research in this field.  When the dependence structure is unknown, an aggregate risk $S_n$ lives in an {admissible risk class} as defined  below.
\begin{definition}[\cite{BJW13}]
The \emph{admissible risk class} is defined by the set of sums of random variables with given marginal distributions: \begin{eqnarray*}\mathfrak
S_n(F_1,\ldots,F_n)&=& \left\{X_1+\cdots+X_n: X_i\sim F_i,~i=1,\ldots,n\right\}.\end{eqnarray*}
\end{definition}

For simplicity, throughout this section, we denote by $\mathfrak S_n=\mathfrak S_n(F,\ldots,F)$.
It is immediate that the study of $\mathfrak S_n$ is equivalent to the study of question \textbf{(A)} as mentioned in the introduction.  In practice however, from a risk management perspective,   extremal problems like question \textbf{(B)}  are often of more interest.

 The following  corollary is a straightforward consequence of Theorems \ref{main1a} and \ref{main3}.
\begin{corollary}\label{coro2} Suppose $F$ is any distribution.
\begin{enumerate}[(a)]
 \item If the support of $F$ is contained in $[a,b]$, $a<b, ~a,b\in \R$, then
$$\max_{S\in\mathfrak S_n}\p\left(|S-\E[S]|\le b-a\right)=1.$$
 \item If $F$ has finite $m$-th moment, $m>1$, then uniformly in $n\in \N$, as $k\rightarrow \infty$,
$$\sup_{S\in\mathfrak S_n}\p\left(|S-\E[S]|>k\right)= o(k^{-(m-1)}).$$
\end{enumerate}
\end{corollary}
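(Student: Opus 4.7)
The plan is to derive both parts by exhibiting an explicit element of $\mathfrak S_n$---the END partial sum from Theorem \ref{main1a}---as a witness to the extremal probability; no new ideas are needed beyond unpacking Theorem \ref{main1a}, Lemma \ref{lem2}, and Proposition \ref{main3}.

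For part (a), I would take $X_1,\dots,X_n$ to be the first $n$ variables of the END sequence produced by Theorem \ref{main1a} and set $S:=X_1+\cdots+X_n\in\mathfrak S_n$, so that $\E[S]=n\mu$ and $|S-n\mu|\le Z$ almost surely for some $Z\sim\tilde F$. The only extra ingredient is that when $F$ is supported in $[a,b]$ the residual distribution $\tilde F$ is supported in $[0,b-a]$: by definition $Z=F^{-1}(B(Y))-F^{-1}(A(Y))$ with both terms lying in $[a,b]$, so $Z\le b-a$ a.s. Consequently $\p(|S-\E[S]|\le b-a)=1$, so the maximum over $\mathfrak S_n$ equals $1$.

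For part (b), the same choice of $S$ satisfies $|S-n\mu|\le Z\sim\tilde F$, and the essential observation is that $\tilde F$ is determined by $F$ alone and does not depend on $n$. Invoking Proposition \ref{main3} (which uses Lemma \ref{lem2} to deduce that $\tilde F$ has finite $(m-1)$-st moment, hence $k^{m-1}(1-\tilde F(k))\to 0$), I obtain $\p(|S-\E[S]|>k)\le \p(Z>k)=o(k^{-(m-1)})$ as $k\to\infty$, with the $o(\cdot)$ uniform in $n$ precisely because $\tilde F$ is $n$-free.

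There is no real obstacle: the heavy machinery is already in place, and the corollary reduces to two small observations---(i) $\tilde F$ inherits a bounded support from $F$ in part (a), and (ii) the controlling random variable $Z$ in part (b) has a law depending only on $F$, so its tail estimate transfers to $S-\E[S]$ uniformly across $n$.
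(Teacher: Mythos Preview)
Your proof is correct and follows exactly the route the paper indicates: the paper simply states that Corollary~\ref{coro2} is ``a straightforward consequence of Theorems~\ref{main1a} and~\ref{main3},'' and you have faithfully unpacked that---using the END construction of Theorem~\ref{main1a} (together with the bounded-support observation, which is Corollary~\ref{coro1}(a)) for part~(a), and the tail bound of Proposition~\ref{main3} (via Lemma~\ref{lem2}) for part~(b), with uniformity in $n$ coming from the fact that $Z\sim\tilde F$ does not depend on $n$.
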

In the next two sections, we will look at the extremal  questions related to $\mathfrak S_n$.

\subsection{Bounds on convex functions and convex risk measures}
Convex order (see for example, \cite{SS07}, Chapter 1) describes the preference between risks from the perspective of risk-avoiding investors. As a classic result in this field,  the convex ordering maximum element in $\mathfrak S_n$ is always obtained by the comonotonic scenario; see \cite{DDGKV02} and \cite{DDV11} for general discussions on comonotonicity and its relevance for finance and insurance. On the other hand, finding the convex ordering minimum element for admissible risks is known to be challenging and only limited results are available; see \cite{BJW13}. For example, the infimum  on $\E[g(S)]$  over $S\in\mathfrak S_n$ for a convex function $g$ has been obtained in \cite{WW11} for marginal distributions with a monotone density and \cite{BJW13} for distributions satisfying a condition of complete mixability.

Note that for all $S\in\mathfrak S_n$, $\E[S]$ is a constant. It is well-known that $\E[g(S)]\ge g(\E[S])$ by Jensen's inequality. It is then expected that the infimum on $\E[g(S)]$ over $S\in\mathfrak S_n$ is close to the value $g(\E[S])$. If $F$ is $n$-completely mixable, the infimum is attained for the trivial case $S=\E[S]\in \mathfrak S_n$. Unfortunately, complete mixability is in general very difficult to prove, and often it is not possessed by  many distributions of practical interest. Hence, we will look at a possible upper bound for $\inf_{S\in \mathfrak S_n}\E[g(S)]$ which, along with the natural bound $g(\E[S])$, gives quite a  good estimate of $\inf_{S\in \mathfrak S_n}\E[g(S)]$.

\begin{theorem}\label{main5} Suppose $F$ is a distribution on $[a,b]$, $a<b,~a,b\in \R$,   with mean $\mu$, then for any convex function $g:\R\rightarrow \R$, $$g(n\mu)\le \inf_{S\in \mathfrak S_n}\E[g(S)]\le \frac12g(n\mu+(b-a))+\frac12g(n\mu-(b-a)).$$
\end{theorem}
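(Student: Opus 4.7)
The two inequalities require very different ideas. The lower bound is an immediate application of Jensen's inequality: every $S\in\mathfrak S_n$ has $\E[S]=n\mu$ (since each $X_i$ has mean $\mu$), so $\E[g(S)]\ge g(\E[S])=g(n\mu)$ for any convex $g$. No work there.

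For the upper bound the plan is to exhibit a single $S\in\mathfrak S_n$ that already satisfies the stated inequality. Corollary \ref{coro1}(a) hands us exactly what we need: there exist $X_1,\ldots,X_n\sim F$ with $|S-n\mu|\le b-a$, where $S=X_1+\cdots+X_n\in\mathfrak S_n$. Set $Y=S-n\mu$; then $Y$ is a zero-mean random variable supported in $[-(b-a),\,b-a]$.

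The key observation is a general one-line convexity lemma: any zero-mean random variable $Y$ bounded by $M$ in absolute value can be written as a pointwise convex combination of $M$ and $-M$,
\[
Y=\lambda(Y)\cdot M+(1-\lambda(Y))\cdot(-M),\qquad \lambda(Y)=\frac{Y+M}{2M}\in[0,1],
\]
so convexity of $g$ applied to $g(n\mu+Y)$ gives
\[
g(n\mu+Y)\le \lambda(Y)\,g(n\mu+M)+(1-\lambda(Y))\,g(n\mu-M).
\]
Taking expectations and using $\E[Y]=0\Rightarrow\E[\lambda(Y)]=\tfrac12$ yields
\[
\E[g(S)]\le \tfrac12 g(n\mu+M)+\tfrac12 g(n\mu-M),
\]
and with $M=b-a$ this is exactly the bound, so $\inf_{S\in\mathfrak S_n}\E[g(S)]$ is no larger.

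There is no genuine obstacle here: everything reduces to combining Corollary \ref{coro1}(a) with the two-point convex combination trick. The only subtle point worth stating explicitly is that the same constructed $S$ works simultaneously for all convex $g$, because the bound $|S-n\mu|\le b-a$ is pointwise (a.s.) rather than in distribution, which is precisely the strength provided by Theorem \ref{main1a}.
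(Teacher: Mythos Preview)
Your proof is correct and follows the same architecture as the paper: Jensen for the lower bound, then invoke the END construction (Corollary~\ref{coro1}(a), equivalently Corollary~\ref{coro2}(a)) to obtain an $S\in\mathfrak S_n$ with $|S-n\mu|\le b-a$, and finally show that any zero-mean variable supported in $[-M,M]$ is dominated in convex order by the symmetric Bernoulli on $\{-M,M\}$.

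The only difference is in that last step. The paper verifies the convex-order domination by checking stop-loss premiums $\E[(X-K)_+]\le (1-K)/2$ via a case split on $\p(X\ge K)$. Your chord argument---writing $Y=\lambda(Y)M+(1-\lambda(Y))(-M)$ with $\lambda(Y)=(Y+M)/(2M)$ and applying convexity pointwise before taking expectations---is shorter and more transparent, and it avoids the case analysis entirely. Both arguments prove the same fact; yours just does it in one line.
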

\begin{proof} The first half of the inequality is due to Jensen's inequality. For the second half, by Corollary \ref{coro2},  it suffices to prove that among all distributions on $[n\mu-(b-a),n\mu+(b+a)]$ with mean $n\mu$, the Bernoulli distribution on $\{n\mu-(b-a),n\mu+(b-a)\}$ with equal probability gives the largest possible value of $\E[g(S)]$.

To show this, without loss of generality we assume $\mu=0$ with $b-a=1.$ Let $X$ be any random variable with mean 0 and support $[-1,1]$, and let $Y$ be a Bernoulli random variable with $\p(Y=1)=\p(Y=-1)=1/2$. To show that $X$ is smaller than $Y$ in convex order, it suffices to show that for each $K\in[-1,1]$, $\E[(X-K)_+]\le \E[(Y-K)_+]=(1-K)/2.$

When $\p(X\ge K)\le 1/2$, we have
$$\E[(X-K)_+]=\E[(X-K)\id_{\{X\ge K\}}]\le \E[(1-K)\id_{\{X\ge K\}}]\le \frac12 (1-K).$$
When $\p(X\ge K)> 1/2$, we have
\begin{align*}\E[(X-K)_+]=\E[(K-X)_+]+\E[X-K]&\le \E[(K-X)\id_{\{X< K\}}]-K\\&\le \E[(K+1)\id_{\{X< K\}}]-K\\&\le \frac12(K+1)-K\\&=\frac12 (1-K).\end{align*}
In conclusion, $X$ is smaller than $Y$ in convex order. Thus, $\E[g(X)]\le \E[g(Y)]$ for any convex function $g:\R\rightarrow \R$.
\end{proof}

\begin{remark}
As a special choice of $\E[g(S)]$, the variance of a sequence of identically distributed random variables is of particular importance; see Section 2. The variance bound given in  Proposition \ref{propvar} (a) is stronger than the bound in Theorem \ref{main5} which naturally gives a bound of $(b-a)^2$ if $g(x)$ is taken as $(x-n\mu)^2$.  Other quantities of the type $\E[g(S_n)]$, used in finance and insurance, include stop-loss premiums, European option prices, expected utilities and expected $n$-period returns.
\end{remark}

Another important class of quantities to discuss is the class of risk measures. In order to deteremine capital requirements for financial regulation, various risk measures are used in practice. Since the introduction of coherent risk measures by \cite{ADEH99}, there has been extensive research on coherent as well as non-coherent risk measures; see \cite{MFE05}. Two commonly used capital requirement principles are the Value-at-Risk, defined as \begin{equation}\label{varDEF}\VaR_p(X)= \inf\{x: \p(X\le x)\ge  p\},~~~~ p\in(0,1).\end{equation}
and the Expected Shortfall (ES), also known as the Tail Value-at-Risk (TVaR),
defined as \begin{equation}\label{tvarDEF}\TVaR_p(S)=\frac{1}{1-p}\int_p^1 \mathrm{VaR}_\alpha(S)\d \alpha,~~~~p\in[0,1).\end{equation}
In the case of risk aggregation with dependence uncertainty, finding bounds for VaR and ES becomes an important task (see for example \cite{EPR13}).  We will discuss the VaR case in the next section, and focus on ES for the moment.
By the subadditivity of ES, the upper sharp bound $\sup_{S\in \mathfrak S_n}\TVaR_p(S)$ for any $p\in [0,1)$ is obtained with the comonotonic  scenario, with $\sup_{S\in \mathfrak S_n}\TVaR_p(S)=n\TVaR_p(X)$ for $X\sim F$.  On the other hand, finding the explicit minimal $\TVaR$ for general marginal distributions is an open question (see \cite{BJW13} for a summary of research on explicit lower bounds in convex order for risk aggregation with dependence uncertainty).
Since the risk measure ES preserves the convex order, we have the following corollary for the smallest possible ES.

\begin{corollary}\label{coro3}
\begin{enumerate}[(a)]
\item Suppose $F$ is a distribution on $[a,b]$, $a<b, ~a,b\in \R$ with mean $\mu$, then for $p\in (0,1)$, $$n\mu\le \inf_{S\in \mathfrak S_n}\TVaR_p(S)\le n\mu+(b-a).$$
\item  Suppose $F$ is a distribution with mean $\mu$ and finite second moment, then for $p\in (0,1)$, $$n\mu\le \inf_{S\in \mathfrak S_n}\TVaR_p(S)\le n\mu+K,$$ for some constant $K$ that does not depend on $n$ but possibly depends on $p$.
\end{enumerate}
\end{corollary}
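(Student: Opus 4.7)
The plan is to read both inequalities straight off Theorem~\ref{main1a}, combined with the coherence properties of $\TVaR_p$. The lower bound is the easy direction: every $S\in\mathfrak S_n$ has $\E[S]=n\mu$, and since $\TVaR_p$ preserves convex order (as recalled in the paragraph preceding the corollary) while a constant is the convex-order minimum among random variables with a prescribed mean, we immediately get $\TVaR_p(S)\ge \TVaR_p(n\mu)=n\mu$. Equivalently, since $\VaR_\alpha(S)$ is increasing in $\alpha$, its average on $[p,1]$ dominates its average on $[0,1]$, which equals $\E[S]=n\mu$.

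For the upper bound, I invoke Theorem~\ref{main1a} to obtain $X_1,\dots,X_n$ with $X_i\sim F$ and $Z\sim\tilde F$ satisfying $|S_n-n\mu|\le Z$ a.s., in particular $S_n\le n\mu+Z$ a.s. Monotonicity of $\VaR_\alpha$ then gives $\VaR_\alpha(S_n)\le n\mu+\VaR_\alpha(Z)$ for every $\alpha$, and integrating against $\d\alpha/(1-p)$ yields
$$\inf_{S\in\mathfrak S_n}\TVaR_p(S)\le \TVaR_p(S_n)\le n\mu+\TVaR_p(Z).$$
All that remains is to bound $\TVaR_p(Z)$ by a constant that does not depend on $n$.

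For part~(a), Corollary~\ref{coro1}(a) allows the construction to be made with $Z\le b-a$ a.s., so $\TVaR_p(Z)\le b-a$, giving the stated bound. For part~(b), the finite second moment of $F$ forces, via Lemma~\ref{lem2}, a finite first moment of $\tilde F$; since $Z\ge 0$ we have the elementary estimate $\TVaR_p(Z)\le \E[Z]/(1-p)$, and this quantity depends on $p$ and $F$ but not on $n$, so it serves as the desired constant $K$. I do not anticipate a genuine obstacle: the argument is little more than stringing together Theorem~\ref{main1a}, Lemma~\ref{lem2}, and the coherence (monotonicity, translation invariance, convex-order preservation) of $\TVaR_p$. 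The only point needing a moment's care is the passage from the a.s. inequality $S_n\le n\mu+Z$ to the corresponding $\TVaR_p$ inequality, which is handled by applying pointwise monotonicity at the level of $\VaR_\alpha$ before integrating.
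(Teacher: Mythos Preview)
Your proposal is correct and follows essentially the same route as the paper: the lower bound uses $\TVaR_p(S)\ge \E[S]=n\mu$, and the upper bound uses Theorem~\ref{main1a} to produce $S_n$ with $S_n-n\mu\le Z$ a.s., then monotonicity and translation invariance of $\TVaR_p$ give $\TVaR_p(S_n)\le n\mu+\TVaR_p(Z)$, with $\TVaR_p(Z)$ bounded by $b-a$ in part~(a) and finite (via Lemma~\ref{lem2}) in part~(b). The only cosmetic differences are that the paper cites Corollary~\ref{coro2} rather than Corollary~\ref{coro1} for part~(a), and simply notes $\TVaR_p(Z)<\infty$ in part~(b) without your explicit estimate $\TVaR_p(Z)\le \E[Z]/(1-p)$.
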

\begin{proof} Note that
$\inf_{S\in \mathfrak S_n}\TVaR_p(S)\ge \inf_{S\in \mathfrak S_n}\E[S]= n\mu$.
The other half of  part (a) comes directly from Corollary \ref{coro2}. For part (b), by Theorem \ref{main1a}, we have  that
$$\inf_{S\in \mathfrak S_n}\TVaR_p(S)=\inf_{S\in \mathfrak S_n}\TVaR_p(S_n-n\mu)+n\mu\le \TVaR_p(Z)+n\mu,$$
where $Z \sim \tilde F$. Since $F$ has finite second moment, $Z$ has finite mean, and therefore $\TVaR_p(Z)$ is finite and does not depend on $n$. This completes the proof.
\end{proof}

\begin{remark} Corollary \ref{coro3} gives estimates for the smallest possible $\TVaR_p(S)$ with dependence uncertainty. When $n$ is large and $\mu\ne 0$, the estimation errors are small compared to the major term $n\mu.$
Similar arguments will give asymptotic estimates for any convex risk measure.
\end{remark}

\subsection{Bounds on Value-at-Risk}

The popular quantile-based risk measure VaR is not a convex or coherent risk measure, hence a separate discussion is necessary. Both the maximum and the minimum of VaR with dependence uncertainty are in general unavailable analytically. For existing results on special cases of marginal assumptions, the reader is referred to the recent papers \cite{WPY13} and \cite{PR13}. For a general discussion on the bounds on VaR aggregation and numerical approximations, see \cite{EPR13}.

Recall that $F^{-1}(p)=\inf\{x: F(x)\ge p\}$ for $p\in (0,1]$, hence $\VaR_p(X)=F^{-1}(p)$ for $p\in (0,1)$ where $X\sim F$. For $1\ge q>p\ge 0$, let
$$
\mu_{p,q}=\frac{1}{q-p}\int_p^q F^{-1}(t)\d t.
$$
If $F$ is continuous, $\mu_{p,q}$ is the mean of the conditional distribution of $F$ on $[F^{-1}(p),F^{-1}(q)]$. Note that $\mu_{0,q}$ and $\mu_{p,1}$ might be infinite. %We also use the notation $F^{-1}(p-)=\lim_{q\rightarrow p-} F^{-1}(q).$

\begin{theorem}\label{thmvar}
We have for $p\in (0,1)$ and any distribution $F$, \begin{equation}\label{vareq1a} n\mu_{p,q}-(F^{-1}(q)-F^{-1}(p))\le \sup_{S\in \mathfrak S_n}\VaR_p(S)\le n\mu_{p,1},\end{equation} for any $q\in (p,1]$, and \begin{equation}\label{vareq1b} n\mu_{0,p}\le \inf_{S\in \mathfrak S_n}\VaR_p(S)\le n\mu_{q,p}+(F^{-1}(p)-F^{-1}(q))\end{equation} for any $q\in [0,p)$.

In particular, if $F$ is a distribution on $[a,b]$, $a,b\in \R$, then for $p\in (0,1)$, $$n\mu_{p,1}-(b-F^{-1}(p))\le \sup_{S\in \mathfrak S_n}\VaR_p(S)\le n\mu_{p,1},$$ and $$n\mu_{0,p}\le \inf_{S\in \mathfrak S_n}\VaR_p(S)\le n\mu_{0,p}+(F^{-1}(p)-a).$$

\end{theorem}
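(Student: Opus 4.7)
The plan is to prove the four inequalities in \eqref{vareq1a}--\eqref{vareq1b} separately; the bounded-support addendum then follows by taking $q=1$ in \eqref{vareq1a} and $q=0$ in \eqref{vareq1b}, so that $F^{-1}(q)=b$ and $F^{-1}(q)=a$ respectively.

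For the upper bound in \eqref{vareq1a} I would use that $\VaR_p\le\TVaR_p$ at the level of a single distribution, together with the subadditivity of $\TVaR_p$ already invoked in Section~3.2: for any $S\in\mathfrak S_n$,
\begin{equation*}
\VaR_p(S)\le \TVaR_p(S)\le\sum_{i=1}^n\TVaR_p(X_i)=n\mu_{p,1}.
\end{equation*}
For the lower bound in \eqref{vareq1b} a mirror argument works: set $\mathrm{LTVaR}_p(X):=p^{-1}\int_0^p F_X^{-1}(t)\,\d t=\mu_{0,p}$ for $X\sim F$; since $F_S^{-1}(t)\le F_S^{-1}(p)=\VaR_p(S)$ for $t\le p$ we have $\VaR_p(S)\ge\mathrm{LTVaR}_p(S)$, and a short Choquet-integral check shows that $\mathrm{LTVaR}_p$ is superadditive (its distortion function is convex). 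Thus $\VaR_p(S)\ge\mathrm{LTVaR}_p(S)\ge n\mu_{0,p}$.

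The substantive step is the lower bound in \eqref{vareq1a}, which is where Theorem~\ref{main1a} enters. Fix $q\in(p,1]$ and let $F^{\ast}$ be the distribution of $F^{-1}(V)$ with $V$ uniform on $(p,q]$; a direct calculation shows that $F^{\ast}$ has mean $\mu_{p,q}$ and support contained in $[F^{-1}(p),F^{-1}(q)]$. Applying Corollary~\ref{coro1}(a) to $F^{\ast}$ produces $Y_1,\dots,Y_n\sim F^{\ast}$ with $|Y_1+\cdots+Y_n-n\mu_{p,q}|\le F^{-1}(q)-F^{-1}(p)$ a.s. Let $U\sim\mathrm{U}[0,1]$ be independent of the $Y_i$'s and set
\begin{equation*}
X_i:=F^{-1}(U)\,\id_{\{U\le p\}\cup\{U>q\}}+Y_i\,\id_{\{p<U\le q\}},\qquad i=1,\dots,n.
\end{equation*}
A one-line Fubini check gives $\p(X_i\in B)=\p(F^{-1}(U)\in B)$ for every Borel $B$, so $X_i\sim F$ and $S_n\in\mathfrak S_n$. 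On $\{U>p\}$, which has probability $1-p$, either $U\in(p,q]$ and $S_n\ge n\mu_{p,q}-(F^{-1}(q)-F^{-1}(p))$ by the END bound on the $Y_i$'s, or $U>q$ and $S_n=nF^{-1}(U)\ge nF^{-1}(q)\ge n\mu_{p,q}$. Hence $\p(S_n\ge n\mu_{p,q}-(F^{-1}(q)-F^{-1}(p)))\ge 1-p$, which yields the claimed inequality.

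The upper bound in \eqref{vareq1b} follows from a symmetric construction: with $q\in[0,p)$, apply Corollary~\ref{coro1}(a) to the distribution of $F^{-1}(V)$ with $V$ uniform on $(q,p]$ and splice in the same fashion, now forcing $S_n\le n\mu_{q,p}+(F^{-1}(p)-F^{-1}(q))$ on the probability-$p$ event $\{U\le p\}$. The only place I anticipate friction is in isolating the END piece correctly when $F$ has atoms at the levels $F^{-1}(p)$ or $F^{-1}(q)$, but defining $F^{\ast}$ via the push-forward $F^{-1}(V)$ (rather than by conditional probabilities of $F$) sidesteps the issue and makes the marginal identity $X_i\sim F$ a direct consequence of the uniform splitting of $U$.
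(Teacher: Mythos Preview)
Your splicing construction is cleaner than the paper's route: the paper first reduces to the upper-tail conditional distribution $F_p$ via an equivalence lemma from \cite{BJW13}, works with the right-continuous quantile $\VaR^*_p$, and then treats discontinuous $F$ separately by convolving with a small uniform. Your direct construction inside $\mathfrak S_n$ bypasses all of that, and defining $F^\ast$ as the push-forward of $\mathrm U(p,q]$ under $F^{-1}$ does, as you anticipate, sidestep any atom issues.

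There is, however, a genuine gap in the last step of the lower bound in \eqref{vareq1a}. From $\p(S_n\ge c)\ge 1-p$ you cannot conclude $\VaR_p(S_n)\ge c$; that implication requires the probability inequality to be strict. In your construction the failure is explicit: on $\{U\le p\}$ you have $S_n=nF^{-1}(U)\le nF^{-1}(p)$, so $\p\bigl(S_n\le nF^{-1}(p)\bigr)\ge p$ and hence $\VaR_p(S_n)\le nF^{-1}(p)$. Whenever $\mu_{p,q}>F^{-1}(p)$ and $n$ is large, $nF^{-1}(p)<n\mu_{p,q}-(F^{-1}(q)-F^{-1}(p))$, so your $S_n$ does not witness the bound. (Your upper bound on the infimum is unaffected: $\p(S_n\le c'')\ge p$ \emph{does} give $\VaR_p(S_n)\le c''$ directly, since $\VaR_p$ is the left-continuous inverse.)

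The repair is exactly the limiting device the paper uses. Splice at a level $p'\in(0,p)$ instead of $p$, building the END piece from $F^{-1}(V)$ with $V\sim\mathrm U(p',q]$. Then $\p(S_n\ge c')\ge 1-p'>1-p$ with $c'=n\mu_{p',q}-(F^{-1}(q)-F^{-1}(p'))$, and the strict inequality now yields $\VaR_p(S_n)\ge c'$. Sending $p'\uparrow p$ and using the left-continuity of $F^{-1}$ (hence of $r\mapsto\mu_{r,q}$) gives the claim. With this one adjustment your argument is complete and, unlike the paper's, needs no separate smoothing step for discontinuous $F$.
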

\begin{proof}
First, we assume the distribution $F$ is continuous.  We will use the following equivalence lemma. A proof can be found in Section 4 of \cite{BJW13}, where the alternative definition of VaR will be used:
$$\VaR^*_p(X)=\inf \{x\in \R:\p(X\le x)>p \}.$$
\begin{lemma}
[Lemma 4.3 of \cite{BJW13}] For $p\in(0,1)$ and a continuous distribution $F$, $$\sup_{S\in \mathfrak S_n}\VaR^*_p(S)=\sup\{\mathrm{essinf} S: S\in \mathfrak S_n(F_{p},\ldots,F_{p})\},$$ and $$\inf_{S\in \mathfrak S_n}\VaR_p(S)=\inf \{\mathrm{esssup} S: S\in \mathfrak S_n(F^p,\ldots,F^p)\},$$
where $F_{p}$ is the conditional distribution of $F$ on $[F^{-1}(p),\infty)$ (upper tail), $F^{p}$ is the conditional distribution of $F$ on $(-\infty, F^{-1}(p))$ (lower tail),  $$\mathrm{esssup} S=\sup\{t\in \R: \p(S\le t)<1\},$$ and  $$\mathrm{essinf} S=\inf\{t\in \R : \p(S\le t)>0\}.$$
\end{lemma}
Note the asymmetry between the supremum and infimum.
We first show that \begin{equation}\label{vareq1}\sup_{S\in \mathfrak S_n}\VaR^*_p(S)=\sup\{\mathrm{essinf} S: S\in \mathfrak S_n(F_{p},\ldots,F_{p})\}\ge n\mu_{p,q}-(F^{-1}(q)-F^{-1}(p))\end{equation} for $0<p<q\le 1$. The case when $F^{-1}(q)=\infty$ is trivial, hence we only consider the case when $F^{-1}(q)<\infty.$

Let $F_{p,q}$ be the conditional distribution of $F$ on $[F^{-1}(p),F^{-1}(q)]$ for $0<p<q\le 1$. By Corollary \ref{coro1}, there exist random variables $X_1,\ldots,X_n$ from $F_{p,q}$ such that $X_1+\cdots+X_n \ge m\mu_{p,q}-(F^{-1}(q)-F^{-1}(p)).$ Let $Z$ be any random variable with distribution $F_q$ and let $C$ be a random event independent of $X_1,\ldots,X_n,Z$, with $\p(C)=(q-p)/(1-p).$ Define $Y_i=X_i\id_C+Z(1-\id_C)$ for $i=1,\ldots,n.$
It is straightforward to check that $Y_i$ has distribution $F_p$, and $$Y_1+\cdots+Y_n\ge X_1+\cdots+X_n \ge n\mu_{p,q}-(F^{-1}(q)-F^{-1}(p)).$$ Thus, $\mathrm{essinf}(Y_1+\cdots+Y_n)\ge  n\mu_{p,q}-(F^{-1}(q)-F^{-1}(p)),$ and we obtain \eqref{vareq1}. Since $\VaR_p(X)\ge \VaR^*_{r}(X)$ for any $r<p$ and random variable $X$,  we have that
\begin{align*}\sup_{S\in\mathfrak S_n}\VaR_p(S_n)\ge \lim_{r\rightarrow p-}\sup_{S\in\mathfrak S_n}\VaR^*_{r}(S_n)&\ge  \lim_{r\rightarrow p-}(n\mu_{r,q}-(F^{-1}(q)-F^{-1}(r)))\\&= n\mu_{p,q}-(F^{-1}(q)-F^{-1}(p)).\end{align*}
Note that here we use the fact that $F^{-1}$ is left-continuous.  On the other hand,  $$\sup_{S\in\mathfrak S_n}\VaR_p(S)\le \sup_{S\in\mathfrak S_n}\VaR_p^*(S)= \sup\{\mathrm{essinf} S: S\in \mathfrak S_n(F_{p},\ldots,F_{p})\}\le n\mu_{p,1}$$ always holds trivially. Thus we obtian \eqref{vareq1a} for continuous distribution $F$.

If $F$ is not continuous, let $X\sim F$ and $U_\epsilon\sim \mathrm{U}[0,\epsilon]$, $\epsilon>0$, be independent of $X$. Denote $X_\epsilon=X-U_\epsilon$ and let $F_\epsilon$ be the distribution function of $X_\epsilon$. It is easy to check that  $F_\epsilon$ is a continuous distribution. By the monotonicity of VaR, it is easy to check that $$\sup \{\VaR(S):S\in \mathfrak S_n(F,\cdots,F)\}\ge \sup \{\VaR(S):S\in \mathfrak S_n(F_\epsilon,\cdots,F_\epsilon)\}.$$
For $1\ge q>p\ge 0$, let
$$\mu^{\epsilon}_{p,q}=\frac{1}{q-p}\int_p^q F_\epsilon^{-1}(t)\d t.$$

Since $X_\epsilon\rightarrow X$ in $L^\infty$, it is easy to see that $F_\epsilon ^{-1}(t)\rightarrow F^{-1}(t)$ for $t\in [0,1]$ as $\epsilon \rightarrow 0.$ By Fatou's lemma, we also have $\mu^{\epsilon}_{p,q}\rightarrow \mu_{p,q}$ as $\epsilon \rightarrow 0$. It follows from \eqref{vareq1a} for $F_\epsilon$ that
\begin{align*}
 \sup_{S\in \mathfrak S_n}\VaR_p(S)&\ge  \sup_{S\in \mathfrak S_n(F_\epsilon,\ldots,F_\epsilon)}\VaR_p(S) \\&\ge n\mu^\epsilon_{p,q}-(F_\epsilon^{-1}(q)-F_\epsilon^{-1}(p))\\&\rightarrow  n\mu_{p,q}-(F^{-1}(q)-F^{-1}(p)), \mbox{~as $\epsilon\rightarrow 0$.}
\end{align*}
Thus, we complete \eqref{vareq1a}. Similarly, we can show \eqref{vareq1b}.
\end{proof}

\begin{corollary}\label{estvar} Suppose $F$ has finite $k$-th moment, $k\ge 1$. Then
$${\sup_{S\in\mathfrak S_n}\VaR_p(S)}=n\mu_{p,1}-o(n^{1/k}),$$
and
$${\inf_{S\in\mathfrak S_n}\VaR_p(S)}=n\mu_{0,p}+o(n^{1/k}).$$
\end{corollary}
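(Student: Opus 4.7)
The plan is to exploit the two-sided bounds already proved in Theorem \ref{thmvar}. For the supremum, the upper bound in \eqref{vareq1a} is exactly $n\mu_{p,1}$; all that remains is to choose a sequence $q_n \uparrow 1$ so that the lower bound $n\mu_{p,q_n} - (F^{-1}(q_n) - F^{-1}(p))$ comes within $o(n^{1/k})$ of $n\mu_{p,1}$. The infimum case is perfectly symmetric (taking $q_n \downarrow 0$ in \eqref{vareq1b}), so I will concentrate on the supremum.

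First I would record the algebraic identity
$$\mu_{p,1} - \mu_{p,q} \;=\; \frac{1-q}{1-p}\bigl(\mu_{q,1} - \mu_{p,q}\bigr),$$
which follows by splitting $\int_p^1 F^{-1}(t)\,\mathrm{d}t$ at $q$. Multiplying by $n$ turns the gap $n(\mu_{p,1}-\mu_{p,q})$ into $\frac{n(1-q)}{1-p}(\mu_{q,1}-\mu_{p,q})$. This reduces everything to controlling the two quantities $\mu_{q,1}$ and $F^{-1}(q)$ as $q \to 1$.

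The second step is a Hölder estimate using the finite $k$-th moment: writing $\epsilon(q):=\bigl(\int_q^1 |F^{-1}(t)|^k\,\mathrm{d}t\bigr)^{1/k}$, which tends to $0$ as $q \to 1$, I get
$$(1-q)|\mu_{q,1}| \;\le\; \int_q^1 |F^{-1}(t)|\,\mathrm{d}t \;\le\; (1-q)^{1-1/k}\,\epsilon(q),$$
hence $|\mu_{q,1}| \le (1-q)^{-1/k}\epsilon(q)$. Similarly $|F^{-1}(q)|^k(1-q) \le \int_q^1 |F^{-1}(t)|^k\,\mathrm{d}t$ (used once $F^{-1}(q)\ge 0$; otherwise $F^{-1}(q)$ is bounded as $F^{-1}$ is increasing), giving $|F^{-1}(q)| \le (1-q)^{-1/k}\epsilon(q) + O(1)$.

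The third step is the balancing choice $q_n := 1 - 1/n$, so $(1-q_n)^{-1/k} = n^{1/k}$. The identity plus the Hölder bounds yield
$$n\,|\mu_{p,1}-\mu_{p,q_n}| \;=\; \tfrac{1}{1-p}|\mu_{q_n,1}-\mu_{p,q_n}| \;\le\; \tfrac{1}{1-p}\bigl(n^{1/k}\epsilon(q_n) + O(1)\bigr) \;=\; o(n^{1/k}),$$
and also $F^{-1}(q_n)-F^{-1}(p) = o(n^{1/k})$, since $\epsilon(q_n)\to 0$ and $\mu_{p,q_n}\to \mu_{p,1}$ is bounded. Substituting into \eqref{vareq1a} sandwiches the supremum as $n\mu_{p,1} - o(n^{1/k}) \le \sup_{S\in\mathfrak S_n}\VaR_p(S) \le n\mu_{p,1}$, which is the first claim. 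The infimum claim is obtained by applying the identical argument to \eqref{vareq1b} with $q_n = 1/n$, using the mirror identity $\mu_{0,p} - \mu_{q,p} = -\frac{q}{p}(\mu_{0,q} - \mu_{q,p})$ and Hölder on $[0,q_n]$.

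I do not expect serious difficulty here; the proof is essentially bookkeeping once the identity above is in hand. The only minor wrinkle is verifying that $F^{-1}(q_n)$ is indeed $o(n^{1/k})$ in all cases (positive tail, negative tail, bounded support), but each case collapses to either the Hölder estimate or to a trivial $O(1)$ bound.
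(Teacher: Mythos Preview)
Your proof is correct and follows the same overall strategy as the paper---plug a suitable sequence $q_n\uparrow 1$ into \eqref{vareq1a}---but the analytical execution is genuinely different. The paper takes $q_n=F(an^{1/k})$ (so that $F^{-1}(q_n)\le an^{1/k}$ by construction), bounds $n(\mu_{p,1}-\mu_{p,q_n})$ by $\frac{n}{1-p}\,\E[X\id_{\{X\ge an^{1/k}\}}]$, and uses the Markov-type estimate $(an^{1/k})^{k-1}\E[X\id_{\{X\ge an^{1/k}\}}]\le \E[|X|^k\id_{\{X\ge an^{1/k}\}}]\to 0$; this leaves a residual $an^{1/k}$ term, which is then killed by letting $a\downarrow 0$. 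Your choice $q_n=1-1/n$ together with the splitting identity $\mu_{p,1}-\mu_{p,q}=\frac{1-q}{1-p}(\mu_{q,1}-\mu_{p,q})$ and H\"older on $\int_{q_n}^1|F^{-1}|$ handles both error terms $n(\mu_{p,1}-\mu_{p,q_n})$ and $F^{-1}(q_n)-F^{-1}(p)$ simultaneously as $o(n^{1/k})$, with no auxiliary parameter to send to zero. The paper's route is slightly more probabilistic in flavor; yours is a bit more self-contained and avoids the double limit. Either way the moment hypothesis enters through the vanishing tail integral $\int_{q_n}^1|F^{-1}(t)|^k\,\d t\to 0$.
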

\begin{proof} Without loss of generality we assume $F^{-1}(p)\ge 0$ (otherwise this assumption can easily be satisfied with a shift of location).
Choose $q_n=F(an^{1/k})$ in \eqref{vareq1a} for any constant $a>0$ and large $n$ such that $q_n>p$. We have that $F^{-1}(q_n)\le an^{1/k}$, and \begin{align}\nonumber \sup_{S\in\mathfrak S_n}\VaR_p(S)\ge   n\mu_{p,q_n}-(F^{-1}(q_n)-F^{-1}(p)) &\ge  n\mu_{p,q_n}- an^{1/k} \\&=n\mu_{p,1}-n(\mu_{p,1}-\mu_{p,q_n})-an^{1/k}.\label{estvar1}\end{align}
Note that for $X\sim F$, $$\mu_{p,1}-\mu_{p,q_n} =\frac{1}{1-p}\E[X\id_{\{X\ge F^{-1}(p)\}}]-\frac{1}{q_n-p}\E[X\id_{\{F^{-1}(q_n)\ge X\ge F^{-1}(p)\}}]\le \frac{1}{1-p}\E[X\id_{\{X\ge F^{-1}(q_n)\}}].$$
Since $F$ has finite $k$-th moment, we have that
$$(an^{1/k})^{(k-1)}\E[X\id_{\{X\ge an^{1/k}\}}]\le\E[|X|^k\id_{\{X\ge an^{1/k}\}}]\rightarrow 0,$$
 and hence $\E[X\id_{\{X\ge an^{1/k}\}}]= o(n^{-1+1/k}) $. Thus, $\frac{1}{1-p}\E[X\id_{\{X\ge F^{-1}(q_n)\}}]=o(n^{-1+1/k}),$ which, together with \eqref{estvar1}, leads to
$${\sup_{S\in\mathfrak S_n}\VaR_p(S)}\ge n\mu_{p,1}-o(n^{1/k})-an^{1/k}.$$
Since $a>0$ is arbitrary, and $\sup_{S\in\mathfrak S_n}\VaR_p(S)\le  n\mu_{p,1}$, we have that
$${\sup_{S\in\mathfrak S_n}\VaR_p(S)}= n\mu_{p,1}-o(n^{1/k}).$$
The other half of the corollary is obtained similarly.
\end{proof}

\begin{remark}Theorem \ref{thmvar} and Corollary \ref{estvar} provide quite good estimates for the worst-case (best-case) VaR   under dependence uncertainty. The estimation becomes accurate when $n$ is large, as $n\mu_{p,1}$ (or $n\mu_{0,p}$) is large compared to the estimation error which is controlled within a rate of $n^{1/k}$, except for the trivial case when $\mu_{p,1}=0$ (or $\mu_{0,p}=0$).
\end{remark}

The next two corollaries give the asymptotic limit of the superadditive ratio (see \cite{EPR13}) for VaR and the asymptotic equivalence between worst-case VaR and worst-case ES (see \cite{PWW13}).

\begin{corollary}\label{equiva2} For any distribution $F$,  as $n\rightarrow \infty$,
$$\frac{\sup_{S\in\mathfrak S_n}\VaR_p(S)}{n }\rightarrow  {\TVaR_p(X)} $$
where $X\sim F$.
\end{corollary}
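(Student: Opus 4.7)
The plan is to read off the two-sided sandwich directly from Theorem \ref{thmvar} and then pass to the limit in $n$ first and in $q$ second. Note that $\TVaR_p(X) = \mu_{p,1}$ by the definition \eqref{tvarDEF}, so the claim is $\sup_{S\in\mathfrak S_n}\VaR_p(S)/n \to \mu_{p,1}$.

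From \eqref{vareq1a}, for any $q\in(p,1)$ and every $n$,
\begin{equation*}
\mu_{p,q} - \frac{F^{-1}(q)-F^{-1}(p)}{n} \;\le\; \frac{\sup_{S\in\mathfrak S_n}\VaR_p(S)}{n} \;\le\; \mu_{p,1}.
\end{equation*}
The upper bound already gives $\limsup_{n\to\infty} \sup_{S\in\mathfrak S_n}\VaR_p(S)/n \le \mu_{p,1}$, without any moment assumption. For the lower bound, fix $q \in (p,1)$; then $F^{-1}(q) < \infty$, so the subtracted term vanishes as $n\to\infty$, giving $\liminf_{n\to\infty} \sup_{S\in\mathfrak S_n}\VaR_p(S)/n \ge \mu_{p,q}$.

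Now let $q \uparrow 1$. Since $F^{-1}$ is increasing on $(p,1)$, the function $q \mapsto \mu_{p,q} = \frac{1}{q-p}\int_p^q F^{-1}(t)\,dt$ is nondecreasing in $q$; by monotone convergence (using $F^{-1}(t) \ge F^{-1}(p) > -\infty$ on $[p,1]$ after a harmless shift of location), $\mu_{p,q} \to \mu_{p,1}$ as $q \uparrow 1$, whether $\mu_{p,1}$ is finite or $+\infty$. Combining with the upper bound yields $\sup_{S\in\mathfrak S_n}\VaR_p(S)/n \to \mu_{p,1} = \TVaR_p(X)$.

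The only subtle point is the case $\mu_{p,1} = +\infty$: here we interpret the upper bound as vacuous, and the lower-bound argument alone forces the limit to be $+\infty$, since $\mu_{p,q} \to +\infty$. No additional work beyond Theorem \ref{thmvar} is needed, which is exactly why this corollary is strictly stronger than what Corollary \ref{estvar} delivers: it requires no finite moment at all.
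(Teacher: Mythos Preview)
Your proof is correct and follows essentially the same approach as the paper: both arguments sandwich $\sup_{S\in\mathfrak S_n}\VaR_p(S)/n$ between $\mu_{p,q}-(F^{-1}(q)-F^{-1}(p))/n$ and $\mu_{p,1}$ via Theorem~\ref{thmvar}, then push $q\to 1$. The only cosmetic difference is that the paper uses a diagonal choice $q_n=F(\sqrt n)$ and lets $n\to\infty$ in one step, whereas you first send $n\to\infty$ with $q$ fixed and then let $q\uparrow 1$; both routes rely on $\mu_{p,q}\to\mu_{p,1}$ and are equivalent.
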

\begin{proof}
We take $q_n=F(\sqrt n)$ for large $n$ such that $q_n>p$. It follows from \eqref{vareq1a} that
$$\mu_{p,q_n}-o\left(\frac{1}{\sqrt n}\right)\le \frac{\sup_{S\in\mathfrak S_n}\VaR_p(S)}{n}\le \mu_{p,1}.$$ Obviously $q_n\rightarrow 1$, and hence $\mu_{p,q_n}\rightarrow \mu_{p,1}.$  This completes the proof.
In fact, if $\mu_{p,1}<\infty$, then $F$ has finite mean and
Corollary \ref{equiva2} follows directly from Corollary \ref{estvar} by taking $k=1$.
\end{proof}

\begin{remark}
The fraction $ {\sup_{S\in\mathfrak S_n}\VaR_p(S)}/{(n\VaR_p(X))}$ for $\VaR_p(X)>0$ is called the (worst) superadditive ratio of VaR (see \cite{EPR13}). It measures the amount of  possible extra capital requirement needed in a diversification strategy, and hence this quantity is of independent interest in quantitative risk management.
 Corollary \ref{equiva2} gives the limit as ${\TVaR_p(X)}/{\VaR_p(X)}$ without assuming any condition on $F$. Note that here $\TVaR_p(X)$ can be infinite. Hence, whenever $\TVaR_p(X)=\infty$, the superadditive ratio of VaR becomes infinity. This fact clearly shows that the ``diversification benefits" commonly used in practical risk management needs to be taken with care.
\end{remark}

\begin{corollary}\label{equiva} Suppose $F$ has finite $k$-th moment, $k\ge1$ and non-zero $\TVaR$ at level $p\in (0,1)$, then  as $n\rightarrow \infty$,
\begin{equation}\label{limeq2}\frac{\sup_{S\in\mathfrak S_n}\VaR_p(S)}{\sup_{S\in\mathfrak S_n}\TVaR_p(S)}=1-o(n^{-1+1/k}).\end{equation}
\end{corollary}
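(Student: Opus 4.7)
The plan is to combine Corollary \ref{estvar} with the comonotonic worst-case identity for $\TVaR$, which is noted just before Corollary \ref{coro3}. First I would record that, by subadditivity of $\TVaR$ together with its comonotonic additivity,
\begin{equation*}
\sup_{S\in\mathfrak S_n}\TVaR_p(S) \;=\; n\,\TVaR_p(X) \;=\; n\,\mu_{p,1}, \qquad X\sim F,
\end{equation*}
so that the denominator in \eqref{limeq2} is simply $n\mu_{p,1}$. The standing hypothesis $\TVaR_p(X)\ne 0$ is exactly what makes $\mu_{p,1}$ a nonzero constant, which we will need to carry out the rate conversion in the last step.

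Second, I would invoke Corollary \ref{estvar}: under the finite $k$-th moment assumption,
\begin{equation*}
\sup_{S\in\mathfrak S_n}\VaR_p(S) \;=\; n\,\mu_{p,1} \;-\; o(n^{1/k}).
\end{equation*}
From Theorem \ref{thmvar} we also have the pointwise upper bound $\sup_{S\in\mathfrak S_n}\VaR_p(S)\le n\mu_{p,1}$, so the $o(n^{1/k})$ residual is nonnegative; equivalently, $n\mu_{p,1}-\sup_{S\in\mathfrak S_n}\VaR_p(S)=o(n^{1/k})$.

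Third, I would just divide. Using $\mu_{p,1}\ne 0$,
\begin{equation*}
\frac{\sup_{S\in\mathfrak S_n}\VaR_p(S)}{\sup_{S\in\mathfrak S_n}\TVaR_p(S)}
\;=\; 1 \;-\; \frac{n\mu_{p,1}-\sup_{S\in\mathfrak S_n}\VaR_p(S)}{n\mu_{p,1}}
\;=\; 1 \;-\; \frac{o(n^{1/k})}{n\mu_{p,1}}
\;=\; 1 \;-\; o\bigl(n^{-1+1/k}\bigr),
\end{equation*}
as $n\to\infty$, which is exactly \eqref{limeq2}.

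There is no real obstacle here beyond properly unpacking the definitions: the nontrivial input is Corollary \ref{estvar}, which in turn rests on Theorem \ref{thmvar} and ultimately on the END construction of Theorem \ref{main1a}. The only subtlety worth flagging in the write-up is the role of the hypothesis $\TVaR_p(X)\ne 0$: it is needed not for the existence of the limit but for the conversion of an $o(n^{1/k})$ numerator into an $o(n^{-1+1/k})$ rate after dividing by the linear-in-$n$ quantity $n\mu_{p,1}$; if $\mu_{p,1}=0$, the ratio form \eqref{limeq2} is vacuous.
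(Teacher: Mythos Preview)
Your proposal is correct and follows essentially the same approach as the paper: identify the denominator as $n\mu_{p,1}$ via the comonotonic worst case for $\TVaR$, invoke Corollary~\ref{estvar} for the numerator, and divide. The paper's own proof is a two-line version of exactly this argument.
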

\begin{proof}
Note that $\sup_{S\in\mathfrak S_n}\TVaR_p(S)=n\TVaR_p(X)=n\mu_{p,1}\ne 0$ for $X\sim F$. Thus, the proof follows directly from Corollary \ref{estvar}.
%Similar to the proof of Corollary \ref{estvar}, we use \eqref{vareq1a} and take $q_n=F(\sqrt n)$ for large $n$ such that $q_n>p$. Obviously $q_n\rightarrow 1$. If $\mu_{p,1}>0$, $$1\ge \frac{\mu_{p,q_n}}{\mu_{p,1}}\ge \frac{n\mu_{p,q_n}-(F^{-1}(q_n)-F^{-1}(p))}{n\mu_{p,1}}\ge \frac{n\mu_{p,q_n}-\sqrt n}{n\mu_{p,1}}=\frac{\mu_{p,q_n}}{\mu_{p,1}}-\frac1{\sqrt n\mu_{p,1}}\rightarrow 1.$$
%If $\mu_{p,1}<0$, $$1\le \frac{\mu_{p,q_n}}{\mu_{p,1}}\le \frac{n\mu_{p,q_n}-(F^{-1}(q_n)-F^{-1}(p))}{n\mu_{p,1}}\le \frac{n\mu_{p,q_n}-\sqrt n}{n\mu_{p,1}}=\frac{\mu_{p,q_n}}{\mu_{p,1}}-\frac1{\sqrt n\mu_{p,1}}\rightarrow 1.$$
%Thus, by Theorem \ref{thmvar}, we have
%$$\frac{\sup_{S\in\mathfrak S_n}\VaR_p(S)}{n\mu_{p,1}}\rightarrow 1.$$
\end{proof}

\begin{remark} Corollary \ref{equiva} implies that
 under the worst-case scenario of dependence, the VaR and ES risk measures are asymptotically equivalent; that is,
\begin{equation}\label{limeq}\frac{\sup_{S\in\mathfrak S_n}\VaR_p(S)}{\sup_{S\in\mathfrak S_n}\TVaR_p(S)}\rightarrow 1\end{equation}
This coincides with the main results in \cite{PR13d} and \cite{PWW13}. \cite{PR13d} obtained \eqref{limeq} under a condition of complete mixability, which at this moment is only known to be  satisfied by tail-monotone densities. \cite{PWW13} gave \eqref{limeq} under a weaker condition that $F$ has a strictly positive density and discussed some possible inhomogeneous cases.  Both of the above papers assumed the continuity of $F$. To ensure \eqref{limeq}, Corollary \ref{equiva} only assumes that $\TVaR_p(X_1)$ is finite and non-zero, which is necessary. Hence, Corollary \ref{equiva}  establishes the weakest mathematical assumption for \eqref{limeq} to hold. In addition, Corollary \ref{equiva} also gives the convergence rate of this asymptotic equivalence. We can see that the convergence in \eqref{limeq2} is fast for the distribution $F$  being light-tailed,  and slow for $F$ being heavy-tailed. This gives a theoretical justification of the discussion on the numerical illustrations in Section 5 of \cite{PWW13}, where it was observed that heavy-tailed marginal distributions in general lead to a slower convergence of \eqref{limeq}, compared to the cases of light-tailed marginal distributions.
\end{remark}

 \section{Conclusion}

In this paper, we introduce the notions of extreme negative dependence (END) and strong extreme negative dependence (SEND) scenarios, and showed that for each marginal distribution $F$ with finite mean, a construction of an END sequence is always possible.  With a finite third moment of $F$, an SEND sequence is also obtained by the same construction. The sum of END random variables is in general concentrated around its expectation, and the difference $|S_n-n\mu|$ is controlled by a random variable that does not depend on $n$. We suggest that the concept of END,  comparable to the concepts of independence and comonotonicity, is a new benchmark for negative correlation in the study of summation of random variables.
We also studied asymptotic bounds for risk aggregation with dependence uncertainty and provided estimates for the worst-case and best-case risk measures VaR and TVaR.

The concepts of END and SEND can naturally be generalized to the case of an inhomogeneous (non-identically distributed) sequence of random variables, leading to a potential direction of future research.   Generalizations of END and SEND in a multi-dimensional setting is also a promising research direction especially with  applications in QMC simulation.

\subsection*{Acknowledgement} R. Wang acknowledges support from the Natural Sciences
and Engineering Research Council of Canada (NSERC) and the Forschungsinstitut f\"ur Mathematik (FIM) at ETH Zurich during his visit in Zurich.
The authors would also like to thank Paul Embrechts (ETH Zurich), Thomas Mikosch (U Copenhagen) and Giovanni Puccetti (U Firenze) for helpful comments which have essentially improved the paper.

\end{document}